\theoremstyle{plain}
\newtheorem{lemma}{Lemma}
\newtheorem{theorem}{Theorem}
\newtheorem{corollary}{Corollary}
\theoremstyle{definition}
\newcommand{\Sec}[1]{Section~\ref{sec:#1}}
\newcommand{\Eq}[1]{Eq.~(\ref{eq:#1})}
\newcommand{\Alg}[1]{Alg.~(\ref{alg:#1})}
\newcommand{\Fig}[1]{Figure~\ref{fig:#1}}
\newcommand{\Theorem}[1]{Theorem~\ref{th:#1}}
\newcommand{\Corollary}[1]{Corollary~\ref{cor:#1}}
\newcommand{\Lemma}[1]{Lemma~\ref{lem:#1}}
\newcommand{\BEAS}{\begin{eqnarray*}}
\newcommand{\EEAS}{\end{eqnarray*}}
\newcommand{\BEA}{\begin{eqnarray}}
\newcommand{\EEA}{\end{eqnarray}}
\newcommand{\BEQ}{\begin{equation}}
\newcommand{\EEQ}{\end{equation}}
\newcommand{\BIT}{\begin{itemize}}
\newcommand{\EIT}{\end{itemize}}
\newcommand{\BNUM}{\begin{enumerate}}
\newcommand{\ENUM}{\end{enumerate}}
\newcommand{\BA}{\begin{array}}
\newcommand{\EA}{\end{array}}
\newcommand{\diag}{\mathop{\rm diag}}
\newcommand{\one}{\mathds{1}}
\newcommand{\scprod}[2]{\langle#1,#2\rangle}
\newcommand{\argmin}{\mathop{\rm argmin}}
\newcommand{\tr}{\mathop{ \rm tr}}
\def \S{  { \Sigma} }
\def \R{{\mathbb R}}
\def \N{{\mathbb N}}
\newcommand{\ie}{i.e.\ }
\newcommand{\eg}{e.g.\ }
\newcommand{\st}{\mbox{\ s.t.\ }}
\newcommand{\avgFun}{\bar{f}}
\newcommand{\diamG}{\Delta}
\newcommand{\Erdos}{Erd\"os-R\'enyi }
\title{Optimal algorithms for smooth and strongly convex distributed optimization in networks}
\author{
Kevin Scaman$^1$\quad Francis Bach$^2$
\vspace{0.5em}
\\S\'ebastien Bubeck$^3$\quad Yin Tat Lee$^3$\quad Laurent Massouli\'e$^1$
\vspace{1em}
\\$^1$ MSR-INRIA Joint Center, Palaiseau, France
\\$^2$ INRIA, Ecole Normale Sup\'erieure, Paris, France
\\$^3$ Theory group, Microsoft Research, Redmond, United States
\\\normalsize\texttt{\{kevin.scaman,francis.bach,laurent.massoulie\}@inria.fr}
\\\normalsize\texttt{\{yile,sebubeck\}@microsoft.com}
}
\date{}
\begin{document}
\maketitle

\begin{abstract}
In this paper, we determine the optimal convergence rates for strongly convex and smooth distributed optimization in two settings: centralized and decentralized communications over a network. For centralized (\ie \emph{master/slave}) algorithms, we show that distributing Nesterov's accelerated gradient descent is optimal and achieves a precision $\varepsilon>0$ in time $O(\sqrt{\kappa_g}(1+\diamG\tau)\ln(1/\varepsilon))$, where $\kappa_g$ is the condition number of the (global) function to optimize, $\diamG$ is the diameter of the network, and $\tau$ (resp.\ $1$) is the time needed to communicate values between two neighbors (resp.\ perform local computations). For decentralized algorithms based on gossip, we provide the first optimal algorithm, called the \emph{multi-step dual accelerated} (MSDA) method, that achieves a precision $\varepsilon>0$ in time $O(\sqrt{\kappa_l}(1+\frac{\tau}{\sqrt{\gamma}})\ln(1/\varepsilon))$, where $\kappa_l$ is the condition number of the local functions and $\gamma$ is the (normalized) eigengap of the gossip matrix   used for communication between nodes. We then verify the efficiency of MSDA against state-of-the-art methods for two problems: least-squares regression and classification by logistic regression.
\end{abstract} 

\section{Introduction}

Given the numerous applications of distributed optimization in machine learning, many algorithms have recently emerged, that allow the minimization of objective functions $f$ defined as the average $\frac{1}{n} \sum_{i=1}^n f_i$
 of functions $f_i$ which are respectively accessible by separate nodes in a network \cite{nedic2009distributed,boyd2011distributed,duchi2012dual,doi:10.1137/14096668X}. These algorithms typically alternate local incremental improvement steps (such as gradient steps)  with communication steps between nodes in the network, and come with a variety of convergence rates (see for example \cite{shi2014linear,doi:10.1137/14096668X,jakovetic2015linear,2016arXiv160703218N}). 
 
 Two main regimes have been looked at: (a) \emph{centralized} where communications are precisely scheduled and (b) \emph{decentralized} where communications may not exhibit a precise schedule. In this paper, we consider these two regimes for objective functions which are smooth and strongly-convex and for which algorithms are linearly (exponentially) convergent. The main contribution of this paper is to propose new and matching upper and lower bounds of complexity for this class of distributed  problems.
 
 The optimal complexity bounds depend on natural quantities in optimization and network theory. Indeed, (a) for a single machine the optimal number of gradient steps to optimize a function is proportional to the square root of the condition number \cite{nesterov2004introductory}, and (b) for mean estimation, the optimal number of communication steps is proportional to the diameter of the network in centralized problems or to the square root of the eigengap of the Laplacian matrix in decentralized problems \cite{boyd2006randomized}. As shown in \Sec{lb}, our lower complexity bounds happen to be combinations of the two contributions above.

These lower complexity bounds are attained by two separate algorithms. In the centralized case, the trivial distribution of Nesterov's accelerated gradient attains this rate, while in the decentralized case, as shown in \Sec{dual_alg}, the rate is achieved by a dual algorithm. We compare favorably our new optimal algorithms to existing work in \Sec{exps}.

\textbf{Related work.}
Decentralized optimization has been extensively studied and early methods  such as decentralized gradient descent \cite{nedic2009distributed,jakovetic2014fast} or decentralized dual averaging \cite{duchi2012dual} exhibited sublinear convergence rates. More recently, a number of methods with provable linear convergence rates were developed, including EXTRA \cite{doi:10.1137/14096668X,Mokhtari:2016:DDD:2946645.2946706}, augmented Lagrangians \cite{jakovetic2015linear}, and more recent approaches \cite{2016arXiv160703218N}. The most popular of such approaches is the distributed alternating direction method of multipliers (D-ADMM) \cite{boyd2011distributed,wei2012distributed,shi2014linear} and has led to a large number of variations and extensions. In a different direction, second order methods were also investigated \cite{7576649,2016arXiv160606593T}. However, to the best of our knowledge, the field still lacks a coherent theoretical understanding of the optimal convergence rates and its dependency on the characteristics of the communication network.
In several related fields, complexity lower bounds were recently investigated, including the sequential optimization of a sum of functions \cite{DBLP:conf/icml/ArjevaniS16,DBLP:conf/nips/ArjevaniS16}, distributed optimization in flat (i.e. totally connected) networks \cite{Shamir:2014:FLO:2968826.2968845,DBLP:conf/nips/ArjevaniS15}, or distributed stochastic optimization \cite{shamir2014distributed}.

\section{Distributed optimization setting}\label{sec:setting}

\subsection{Optimization problem}
Let $\mathcal{G} = (\mathcal{V},\mathcal{E})$ be a connected simple (\ie undirected) graph of $n$ computing units and diameter $\diamG$, each having access to a function $f_i(\theta)$ over $\theta \in \R^d$. We consider minimizing the average of the local functions
\BEQ\label{eq:global_opt}
\min_{\theta\in\R^d} \avgFun(\theta) = \frac{1}{n} \sum_{i=1}^n f_i(\theta)
\EEQ
in a distributed setting. More specifically, we assume that:
\BNUM
\item Each computing unit can compute first-order characteristics, such as the gradient of its own function or its Fenchel conjugate. By renormalization of the time axis, and without loss of generality, we assume that this computation is performed in one unit of time.
\item Each computing unit can communicate values (\ie vectors in $\R^d$) to its neighbors. This communication requires a time $\tau$ (which may be smaller or greater than~$1$).
\ENUM

These actions may be performed asynchronously and in parallel, and each node $i$ possesses a local version of the parameter, which we refer to as $\theta_i$.
Moreover, we assume that each function $f_i$ is $\alpha$-strongly convex and $\beta$-smooth, and we denote by $\kappa_l = \frac{\beta}{\alpha} \geq 1$ the local condition number. We also denote by $\alpha_g$, $\beta_g$ and $\kappa_g$, respectively, the strong convexity, smoothness and condition number of the average (global) function $\avgFun$. Note that we always have $\kappa_g \leq \kappa_l$, while the opposite inequality is, in general, not true (take for example $f_1(\theta) = \one\{\theta<0\}\theta^2$ and $f_2(\theta) = \one\{\theta>0\}\theta^2$ for which $\kappa_l=+\infty$ and $\kappa_g=1$). However, the two quantities are close (resp.~equal) when the local functions are similar (resp.~equal) to one another.

\subsection{Decentralized communication}

A large body of literature considers a decentralized approach to distributed optimization based on the \emph{gossip} algorithm \cite{boyd2006randomized,nedic2009distributed,duchi2012dual,wei2012distributed}. In such a case, communication is represented as a matrix multiplication with a matrix $W$ verifying the following constraints:
\BNUM
\item $W$ is an $n\times n$ symmetric matrix,
\item $W$ is positive semi-definite,
\item The kernel of $W$ is the set of constant vectors: ${\rm Ker}(W) = {\rm Span}(\one)$, where $\one = (1,...,1)^\top$,
\item $W$ is defined on the edges of the network: $W_{ij} \ne 0$ only if $i=j$ or $(i, j) \in \mathcal{E}$.
\ENUM

The third condition will ensure that the gossip step converges to the average of all the vectors shared between the nodes. We will denote the matrix $W$ as the \emph{gossip matrix}, since each communication step will be represented using it. Note that a simple choice for the gossip matrix is the Laplacian matrix $L = D - A$, where $A$ is the adjacency matrix of the network and $D = \diag \big( {\sum_i A_{ij}} \big)$. However, in the presence of large degree nodes, weighted Laplacian matrices are usually a better choice, and the problem of optimizing these weights is known as the fastest distributed consensus averaging problem and is investigated by \cite{Xiao200465,doi:10.1137/070689413}.

We will denote by $\lambda_1(W)\geq \cdots \geq\lambda_n(W)=0$ the spectrum of the gossip matrix $W$, and its (normalized) \emph{eigengap} the ratio $\gamma(W) = \lambda_{n-1}(W)/\lambda_1(W)$ between the second smallest and the largest eigenvalue. Equivalently, this is the inverse of the condition number of $W$ projected on the space orthogonal to the constant vector $\one$. This quantity will be the main parameter describing the connectivity of the communication network in \Sec{lb_gossip} and \Sec{dual_alg}.

\section{Optimal convergence rates}\label{sec:lb}
In this section, we prove oracle complexity lower bounds for distributed optimization in two settings: strongly convex and smooth functions for centralized (\ie master/slave) and decentralized algorithms based on a gossip matrix $W$.

In the first setting, we show that distributing accelerated gradient descent matches the optimal convergence rate, while, in the second setting, the algorithm proposed in \Sec{dual_alg} is shown to be optimal. Note that we will use the notation $g(\varepsilon)=\Omega(f(\varepsilon))$ for $\exists C>0 \st \forall \varepsilon>0, g(\varepsilon)\geq Cf(\varepsilon)$, and will, for simplicity, omit the additive terms that do not depend on the precision $\varepsilon$ in \Corollary{lb_diam} and \Corollary{gossipOptimal}.

\subsection{Black-box optimization procedures}
The lower bounds provided hereafter depend on a new notion of black-box optimization procedures for the problem in \Eq{global_opt}, where we consider  distributed algorithms verifying the following constraints:
\BNUM
\item \textbf{Local memory:} each node $i$ can store past values in a (finite) internal memory $\mathcal{M}_{i,t}\subset\R^d$ at time $t\geq 0$. These values can be accessed and used at time $t$ by the algorithm run by node $i$, and are updated either by local computation or by communication (defined below), that is, for all $i\in\{1,...,n\}$,
\begin{equation}
\mathcal{M}_{i,t}\subset \mathcal{M}^{comp}_{i,t} \cup \mathcal{M}^{comm}_{i,t}.
\end{equation}

\item \textbf{Local computation:} each node $i$ can, at time $t$, compute the gradient of its local function $\nabla f_i(\theta)$ or its Fenchel conjugate $\nabla f^*_i(\theta)$ for a value $\theta\in\mathcal{M}_{i,t}$ in the node's internal memory, that is, for all $i\in\{1,...,n\}$,
\begin{equation}
\!\!\!
\mathcal{M}^{comp}_{i,t} = {\rm Span}\left(\{\theta,\nabla f_i(\theta),\nabla f^*_i(\theta) : \theta\in\mathcal{M}_{i,t-1}\}\right).
\end{equation}

\item \textbf{Local communication:} each node $i$ can, at time $t$, share a value to all or part of its neighbors, that is, for all $i\in\{1,...,n\}$,

\begin{equation}
\mathcal{M}^{comm}_{i,t} = {\rm Span}\bigg(\bigcup_{(i,j)\in\mathcal{E}}\mathcal{M}_{j,t-\tau}\bigg).
\end{equation}

\item \textbf{Output value:} each node $i$ must, at time $t$, specify one vector in its memory as local output of the algorithm, that is, for all $i\in\{1,...,n\}$,
\begin{equation}
\theta_{i,t} \in \mathcal{M}_{i,t}.
\end{equation}
\ENUM

Hence, a black-box procedure will return $n$ output values---one for each node of the network---and our analysis will focus on ensuring that \emph{all local output values} are converging to the optimal parameter of \Eq{global_opt}.
Moreover, we will say that a black-box procedure \emph{uses a gossip matrix $W$} if the local communication is achieved by multiplication of a vector with $W$. For simplicity, we assume that all nodes start with the simple internal memory $\mathcal{M}_{i,0} = \{0\}$. Note that communications and local computations may be performed in parallel and asynchronously.

\subsection{Centralized algorithms}\label{sec:lb_centralized}

In this section, we show that, for any black-box optimization procedure, at least $\Omega(\sqrt{\kappa_g}\ln(1/\varepsilon))$ gradient steps and $\Omega(\diamG\sqrt{\kappa_g}\ln(1/\varepsilon))$ communication steps are necessary to achieve a precision $\varepsilon>0$, where $\kappa_g$ is the global condition number and $\diamG$ is the diameter of the network. These lower bounds extend the communication complexity lower bounds for totally connected communication networks of \cite{DBLP:conf/nips/ArjevaniS15}, and are natural since at least $\Omega(\sqrt{\kappa_g}\ln(1/\varepsilon))$ steps are necessary to solve a strongly convex and smooth problem up to a fixed precision, and at least $\diamG$ communication steps are required to transmit a message between any given pair of nodes.

In order to simplify the proofs of the following theorems, and following the approach of \cite{bubeck2015convex}, we will consider the limiting situation $d\rightarrow +\infty$. More specifically, we now assume that we are working in $\ell_2 = \{\theta=(\theta_k)_{k\in\N}~:~\sum_k \theta_k^2 < +\infty\}$ rather than $\R^d$.

\begin{theorem}\label{th:global_lb}
Let $\mathcal{G}$ be a graph of diameter $\diamG > 0$ and size $n > 0$, and $\beta_g\geq\alpha_g>0$. There exists $n$ functions $f_i:\ell_2 \rightarrow \R$ such that $\avgFun$ is $\alpha_g$ strongly convex and $\beta_g$ smooth, and for any $t\geq 0$ and any black-box procedure one has, for all $i\in\{1,...,n\}$,

\BEQ
\avgFun(\theta_{i,t}) - \avgFun(\theta^*) \geq \frac{\alpha_g}{2}\left(1 - \frac{4}{\sqrt{\kappa_g}}\right)^{1+\frac{t}{1+\diamG\tau}} \!\!\!\|\theta_{i,0} - \theta^*\|^2,
\EEQ

where $\kappa_g = \beta_g / \alpha_g$.
\end{theorem}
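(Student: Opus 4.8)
The plan is to adapt the classical construction of a "hard" quadratic function for first-order methods (the one yielding the $\sqrt{\kappa}$ lower bound for a single machine, see \cite{nesterov2004introductory,bubeck2015convex}) and to distribute it across the network so that progress in the relevant coordinates requires information to travel along a geodesic of length $\diamG$. Concretely, I would fix two nodes $u$ and $v$ at distance $\diamG$ in $\mathcal{G}$, and split the shift operator / tridiagonal quadratic form into an "even part" and an "odd part": let $f_u$ contain the quadratic terms coupling coordinates $(\theta_{2k},\theta_{2k+1})$ and $f_v$ contain the terms coupling $(\theta_{2k+1},\theta_{2k+2})$, with all other $f_i$ set to a trivial multiple of $\|\theta\|^2$ (so that $\avgFun$ still has the prescribed $\alpha_g,\beta_g$ after rescaling). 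The point of the even/odd split is the standard one: a gradient of $f_u$ can only create a new nonzero coordinate of even index from one of odd index and vice versa, so to "activate" coordinate $k$ one must alternately query $f_u$ and $f_v$; but $f_u$ and $f_v$ live at nodes $\diamG$ apart, so each alternation costs a full round-trip of communication, i.e. time at least $1+\diamG\tau$ per activated coordinate.

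The key steps, in order, would be: (1) write down the explicit quadratic $\avgFun(\theta)=\frac{\alpha_g}{2}\|\theta\|^2 + \frac{\beta_g-\alpha_g}{8}(\theta^\top M\theta - 2 e_1^\top\theta)$ for the usual semi-infinite tridiagonal $M$ with $2$ on the diagonal and $-1$ off-diagonal, so that $M$ has spectrum in $[0,4]$, hence $\avgFun$ is $\alpha_g$-strongly convex and $\beta_g$-smooth, and record that $\theta^*$ has geometrically decaying coordinates $\theta^*_k=\rho^k$ with $\rho=\frac{\sqrt{\kappa_g}-1}{\sqrt{\kappa_g}+1}$, so $\|\theta^*\|^2$ is finite and any $\theta$ supported on the first $k$ coordinates has $\avgFun(\theta)-\avgFun(\theta^*)\ge \frac{\alpha_g}{2}\rho^{2k}\|\theta^*\|^2$. (2) Partition $M=M_u+M_v$ into the even/odd $2\times 2$ blocks described above, set $f_u = n\big(\tfrac{\alpha_g}{2}\|\theta\|^2/2 + \tfrac{\beta_g-\alpha_g}{8}(\theta^\top M_u\theta - 2e_1^\top\theta)\big)$-type pieces (I'd choose $f_u,f_v$ quadratic with eigenvalues still in $[\alpha_g,\beta_g]$ by keeping the $\alpha_g$-multiple of identity in each, and the remaining $f_i$ equal to $\tfrac{\alpha_g}{2}\|\theta\|^2$ up to the same averaging), and check that $\tfrac1n\sum f_i = \avgFun$ and each $f_i$ is $\alpha$-strongly convex, $\beta$-smooth. (3) Prove the invariant: by induction on $t$, every memory $\mathcal{M}_{i,t}$ is contained in $\mathrm{Span}(e_1,\dots,e_{k_i(t)})$ where $k_i(t)$ is an integer that increases only when the appropriate gradient (even- or odd-advancing) has been computed \emph{and} the necessary coordinate has had time to arrive from the other special node; tracking the bookkeeping shows $k_i(t)\le \lfloor t/(1+\diamG\tau)\rfloor + 1$. (4) Combine (1) and (3): $\avgFun(\theta_{i,t})-\avgFun(\theta^*)\ge\frac{\alpha_g}{2}\rho^{2k_i(t)}\|\theta^*\|^2$, and bound $\rho^{2}\ge (1-4/\sqrt{\kappa_g})$ (valid since $\rho = 1 - \frac{2}{\sqrt{\kappa_g}+1}$, so $\rho^2 \ge 1 - \frac{4}{\sqrt{\kappa_g}+1}\ge 1-\frac{4}{\sqrt{\kappa_g}}$), together with $\theta_{i,0}=0$ so $\|\theta_{i,0}-\theta^*\|=\|\theta^*\|$, to get exactly the stated bound with exponent $1+t/(1+\diamG\tau)$.

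The main obstacle I expect is step (3): making the "information must traverse the graph" argument fully rigorous in the black-box model defined here, where communications and computations run asynchronously and in parallel, each node has its own memory, and one must argue about \emph{all} nodes' outputs simultaneously — not just $u$ and $v$. The delicate point is defining the right per-node potential $k_i(t)$ (roughly, the largest coordinate index "unlocked" at node $i$), showing it can increase by at most one per $1+\diamG\tau$ window because advancing from an even to an odd frontier coordinate requires a fresh $\nabla f_v$ evaluation whose argument in turn needed a coordinate produced by $\nabla f_u$ at a node $\diamG$ hops away, and handling the fact that intermediate nodes only relay (they never increase the frontier since their $f_i$ is a pure multiple of the identity, whose gradient is in the span already). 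Once this frontier-propagation lemma is in place, the rest is the routine quadratic-lower-bound computation.
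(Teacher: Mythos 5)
Your proposal is correct and follows essentially the same route as the paper: the paper's $M_1,M_2$ are exactly your even/odd $2\times2$-block split of the tridiagonal Nesterov matrix, placed on two node sets at distance $\diamG$ (specialized to a pair of extremal nodes), followed by the same frontier bound $k_{i,t}\le t/(1+\diamG\tau)+1$ and the same strong-convexity/geometric-decay computation. The only point the paper handles that you should not forget in step (3) is that the black-box model also allows computing $\nabla f_i^*$, which is covered by noting the conjugates have the same block structure.
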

The proof of \Theorem{global_lb} relies on splitting the function used by Nesterov to prove oracle complexities for strongly convex and smooth optimization \cite{nesterov2004introductory,bubeck2015convex} on two nodes at distance $\diamG$. One can show that most dimensions of the parameters $\theta_{i,t}$ will remain zero, and local gradient computations may only increase the number of non-zero dimensions by one. Finally, at least $\diamG$ communication rounds are necessary in-between every gradient computation, in order to share information between the two nodes. The detailed proof is available as supplementary material.
\begin{corollary}\label{cor:lb_diam}
For any graph of diameter $\diamG$ and any black-box procedure, there exists functions $f_i$ such that the time to reach a precision $\varepsilon > 0$ is lower bounded by

\BEQ
\Omega\left(\sqrt{\kappa_g}\Big(1 + \diamG\tau\Big)\ln\left(\frac{1}{\varepsilon}\right)\right),
\EEQ
\end{corollary}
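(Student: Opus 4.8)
The plan is to derive the time lower bound directly from \Theorem{global_lb} by choosing a convenient pair $(\alpha_g,\beta_g)$ and reading off how large $t$ must be before the right-hand side can drop below a target accuracy $\varepsilon$. First I would fix the condition number $\kappa_g$ to be any constant larger than, say, $16$ (so that $1 - 4/\sqrt{\kappa_g} > 0$ and the bound in the theorem is a genuine geometric decay rather than a triviality), and normalize $\alpha_g$ and $\|\theta_{i,0}-\theta^*\|$ to be constants; the theorem then guarantees functions $f_i$ on $\ell_2$ for which
\[
\avgFun(\theta_{i,t}) - \avgFun(\theta^*) \;\geq\; c\left(1 - \frac{4}{\sqrt{\kappa_g}}\right)^{1+\frac{t}{1+\diamG\tau}}
\]
for an absolute constant $c>0$ and every node $i$. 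Requiring the left-hand side to be at most $\varepsilon$ forces $\left(1 - 4/\sqrt{\kappa_g}\right)^{1+t/(1+\diamG\tau)} \leq \varepsilon/c$, i.e.
\[
1 + \frac{t}{1+\diamG\tau} \;\geq\; \frac{\ln(c/\varepsilon)}{\ln\!\left(\frac{1}{1 - 4/\sqrt{\kappa_g}}\right)} \;=\; \Omega\!\big(\sqrt{\kappa_g}\,\ln(1/\varepsilon)\big),
\]
using $\ln\frac{1}{1-x} = \Theta(x)$ for $x = 4/\sqrt{\kappa_g}$ bounded away from $1$. Rearranging gives $t = \Omega\big(\sqrt{\kappa_g}(1+\diamG\tau)\ln(1/\varepsilon)\big)$, which is the claimed bound.

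To cover all values of $\kappa_g$ (not just a fixed constant), I would run the argument with $\kappa_g$ treated as a free parameter but keep it, say, at least $16$; for $\kappa_g$ in a bounded range the $\sqrt{\kappa_g}$ factor is itself bounded and the bound reduces to $\Omega((1+\diamG\tau)\ln(1/\varepsilon))$, which is still correctly captured by the stated $\Omega(\sqrt{\kappa_g}(1+\diamG\tau)\ln(1/\varepsilon))$ up to the absolute constant. The "additive terms that do not depend on $\varepsilon$" that the paper says it omits are exactly the $+1$ appearing in $1 + t/(1+\diamG\tau)$ and the constant shift coming from $\ln c$; discarding them is harmless for an $\Omega(\cdot)$ statement in $\varepsilon$.

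There is essentially no hard step here — the corollary is a routine unwinding of the geometric rate in \Theorem{global_lb}. The one point that needs a little care is the logarithm manipulation: one must check that $\ln\frac{1}{1-4/\sqrt{\kappa_g}}$ is $\Theta(1/\sqrt{\kappa_g})$ uniformly over the admissible range of $\kappa_g$, which is where the restriction $\kappa_g \geq 16$ (equivalently $4/\sqrt{\kappa_g} \leq 1$, bounded away from the singularity of $\ln\frac{1}{1-x}$ at $x=1$) is used. Everything else is algebra, and since \Theorem{global_lb} already furnishes the functions $f_i$ with the required strong convexity and smoothness, no new construction is needed.
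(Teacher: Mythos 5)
Your proposal is correct and matches the paper's (implicit) derivation: the paper treats \Corollary{lb_diam} as an immediate unwinding of \Theorem{global_lb}, exactly as you do, by setting the geometric bound equal to $\varepsilon$ and using $\ln\frac{1}{1-x}=\Theta(x)$ for $x=4/\sqrt{\kappa_g}$ bounded away from $1$. Your attention to the regime $\kappa_g>16$ and to the omitted $\varepsilon$-independent additive terms is consistent with the paper's own caveat stated just before \Sec{lb}.
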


This optimal convergence rate is achieved by distributing Nesterov's accelerated gradient descent on the global function. Computing the gradient of $\avgFun$ is performed by sending all the local gradients $\nabla f_i$ to a single node (denoted as \emph{master node}) in $\diamG$ communication steps (which may involve several simultaneous messages), and then returning the new parameter $\theta_{t+1}$ to every node in the network (which requires another $\diamG$ communication steps). In practice, summing the gradients can be distributed by computing a spanning tree (with the root as master node), and asking for each node to perform the sum of its children's gradients before sending it to its parent. Standard methods as described by \cite{opac-b1080909} can be used for performing this parallelization of gradient computations.

This algorithm has three   limitations: first, the algorithm is not robust to machine failures, and the central role played by the master node also means that a failure of this particular machine may completely freeze the procedure.
Second, and more generally, the algorithm requires precomputing a spanning tree, and is thus not suited to time-varying graphs, in which the connectivity between the nodes may change through time (\eg in \emph{peer-to-peer networks}).
Finally, the algorithm requires \emph{every} node to complete its gradient computation before aggregating them on the master node, and the efficiency of the algorithm thus depends on the slowest of all machines. Hence, in the presence of non-uniform latency of the local computations, or the slow down of a specific machine due to a hardware failure, the algorithm will suffer a significant drop in performance.

\subsection{Decentralized algorithms}\label{sec:lb_gossip}
The gossip algorithm \cite{boyd2006randomized} is a standard method for averaging values across a network when its connectivity may vary through time. This approach was shown to be robust against machine failures, non-uniform latencies and asynchronous or time-varying graphs, and a large body of literature extended this algorithm to distributed optimization \cite{nedic2009distributed,duchi2012dual,wei2012distributed,doi:10.1137/14096668X,jakovetic2015linear,2016arXiv160703218N,7576649}.

The convergence analysis of decentralized algorithms usually relies on the spectrum of the \emph{gossip matrix} $W$ used for communicating values in the network, and more specifically on the ratio between the second smallest and the largest eigenvalue of $W$, denoted $\gamma$.
In this section, we show that, with respect to this quantity and $\kappa_l$, reaching a precision 
$\varepsilon$ requires at least $\Omega(\sqrt{\kappa_l}\ln(1/\varepsilon))$ gradient steps and $\Omega\left(\sqrt{\frac{\kappa_l}{\gamma}}\ln(1/\varepsilon)\right)$ communication steps, by exhibiting a gossip matrix such that a corresponding lower bound exists.

\begin{theorem}\label{th:local_lb}
Let $\alpha,\beta > 0$ and $\gamma\in(0,1]$. There exists a gossip matrix $W$ of eigengap $\gamma(W)=\gamma$, and $\alpha$-strongly convex and $\beta$-smooth functions $f_i:\ell_2 \rightarrow \R$ such that, for any $t\geq 0$ and any black-box procedure using $W$ one has, for all $i\in\{1,...,n\}$,
\BEQ
\avgFun(\theta_{i,t}) - \avgFun(\theta^*) \geq \frac{3\alpha}{2}\left(1 - \frac{16}{\sqrt{\kappa_l}}\right)^{1+\frac{t}{1+\frac{\tau}{5\sqrt{\gamma}}}} \|\theta_{i,0} - \theta^*\|^2,
\EEQ
where $\kappa_l = \beta / \alpha$ is the local condition number.
\end{theorem}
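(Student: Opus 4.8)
The plan is to build a single hard instance combining the two elementary obstructions: the $\sqrt{\kappa_l}$ lower bound for strongly convex and smooth minimization, and the fact that a gossip matrix of eigengap $\gamma$ needs $\Omega(1/\sqrt{\gamma})$ multiplications to aggregate a value across the network. This parallels the proof of \Theorem{global_lb}, except that the role of the graph diameter is taken over by the eigengap $\gamma$. Concretely I would take the communication graph to be a line on $n=\Theta(1/\sqrt{\gamma})$ nodes with edge weights tuned so that its Laplacian $W$ has eigengap exactly $\gamma$ (for a nearly uniform path, $\lambda_{n-1}(W)/\lambda_1(W)=\Theta(1/n^2)$, which also pins down $n$), and split a Nesterov-type quadratic across it so that making progress requires information to travel between two groups of nodes sitting at the two ends of the line. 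The local functions $f_i$ are chosen quadratic, each $\alpha$-strongly convex and $\beta$-smooth, and so that the average $\avgFun$ is a Nesterov-type function of condition number $\Theta(\kappa_l)$ with $\avgFun(0)-\avgFun(\theta^*)\geq\frac{3\alpha}{2}\|\theta^*\|^2$.

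The splitting is done by parity, and this is where the block structure --- the new ingredient compared to \Theorem{global_lb} --- comes in: the Hessians of the two groups are block-diagonal with $2\times2$ blocks, those of the first group coupling the coordinate pairs $\{2k,2k{+}1\}$ and those of the second the pairs $\{2k{-}1,2k\}$, so that the first group can only turn ${\rm Span}(e_1,\dots,e_{2k})$ into ${\rm Span}(e_1,\dots,e_{2k+1})$ and the second can only turn ${\rm Span}(e_1,\dots,e_{2k+1})$ into ${\rm Span}(e_1,\dots,e_{2k+2})$ (with $e_1,e_2,\dots$ the canonical basis of $\ell_2$). Because the inverse of a block-diagonal matrix is again block-diagonal, this parity alternation survives Fenchel conjugation: both $\nabla f_i$ and $\nabla f_i^*$ obey it. Hence the construction is hard for dual procedures as well as primal ones, which is essential since the matching algorithm of \Sec{dual_alg} is dual.

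The heart of the argument is an induction on $t$ producing an invariant $\mathcal{M}_{i,t}\subseteq{\rm Span}(e_1,\dots,e_{k_i(t)})$ for an integer ``reached dimension'' $k_i(t)$, together with a slow-growth bound on $K(t):=\max_i k_i(t)$. Linear combinations and computations at nodes of the wrong parity leave the $k_i$ unchanged; a computation at a node of the right parity raises its $k_i$ by at most one; and a black-box procedure using $W$, having applied $W$ at most $s$ times after $s$ communication rounds, can only act on the gradient data by polynomials in $W$ of degree $\leq s$. The last point is where the eigengap enters: carrying a value from one end of the line to the other --- which the parity alternation forces before $K$ can increase again --- requires a polynomial $Q$ in $W$ with $Q(0)$ of order one but $|Q|$ small on the rest of the spectrum $[\lambda_{n-1}(W),\lambda_1(W)]$, and a Chebyshev argument makes this cost $\deg Q=\Omega(\sqrt{\lambda_1(W)/\lambda_{n-1}(W)})=\Omega(1/\sqrt{\gamma})$ rounds (for a line this matches the diameter, but the eigengap is the robust quantity). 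Thus each unit increase of $K$ costs at least one gradient step and $\Omega(1/\sqrt{\gamma})$ communication rounds, \ie time at least $1+\tau/(5\sqrt{\gamma})$, giving $K(t)\leq 1+t/(1+\tau/(5\sqrt{\gamma}))$. Finally every output obeys $\theta_{i,t}\in{\rm Span}(e_1,\dots,e_{K(t)})$, on which subspace $\avgFun(\theta)-\avgFun(\theta^*)\geq\frac{3\alpha}{2}(1-16/\sqrt{\kappa_l})^{K}\|\theta^*\|^2$ by the standard resisting-oracle bound (the constants $3/2$ and $16$ absorbing the losses from the parity split and from $\kappa_g=\Theta(\kappa_l)$); substituting the bound on $K(t)$ and $\theta_{i,0}=0$ finishes the proof, simultaneously for all nodes $i$.

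The main obstacle is making the growth lemma precise: one must rule out that some clever interleaving of local gradients, Fenchel-conjugate gradients, and low-degree-in-$W$ communication lets a node resolve a coordinate without the ``honest'' cross-network aggregation, and do so while keeping the condition number of $\avgFun$ of order $\kappa_l$ (so that the resisting-oracle bound is not weakened) and the spectrum of $W$ spread out on $[\lambda_{n-1}(W),\lambda_1(W)]$ (so that the Chebyshev lower bound on $\deg Q$ is genuine). The $2\times2$ block structure is what neutralizes the conjugate gradients; matching the explicit constants $3/2$, $16$ and $1/(5\sqrt{\gamma})$ while juggling the conditioning of $\avgFun$ is the part that needs the most care.
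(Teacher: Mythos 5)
Your construction is essentially the paper's: a weighted line graph on $n=\Theta(1/\sqrt{\gamma})$ nodes (with one edge weight tuned by a continuity argument so that the Laplacian has eigengap exactly $\gamma$), a Nesterov-type quadratic split by parity between two groups of nodes near the two ends of the line, the $2\times 2$ block structure that makes the parity argument survive Fenchel conjugation, and the final bookkeeping $\kappa_g=\Theta(\kappa_l)$ and $d\geq\frac{1}{5\sqrt{\gamma}}$. The paper simply re-invokes the lemma behind Theorem~1 with $A=\{v_1,\dots,v_{\lceil n/32\rceil}\}$ and $d=(1-1/16)n-1$, which is exactly your plan, down to the way the constants $3/2$ and $16$ arise.

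The one load-bearing step you justify differently is not valid as stated. You argue that each unit increase of the reached dimension costs $\Omega(1/\sqrt{\gamma})$ communication rounds because carrying a value across the network "requires a polynomial $Q$ in $W$ with $Q(0)$ of order one but $|Q|$ small on the rest of the spectrum," and you invoke a Chebyshev bound $\deg Q=\Omega(\sqrt{\lambda_1(W)/\lambda_{n-1}(W)})$. That is the degree lower bound for the \emph{consensus} problem (uniformly approximating the projector onto ${\rm Span}(\one)$), not for point-to-point information transfer: for a node $v\in A_d^c$ to acquire a nonzero component that only nodes $u\in A$ possess, one merely needs $e_v^\top Q(W)e_u\neq 0$, which imposes no spectral condition. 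On a star graph $1/\sqrt{\gamma}=\sqrt{n}$ while two leaves exchange information in two rounds, so "$\Omega(1/\sqrt{\gamma})$ rounds to carry a value" is false in general --- the paper makes precisely this caveat after Corollary~2, and it is why the theorem is an existence statement over $W$ rather than a bound for all gossip matrices of eigengap $\gamma$. The correct, and much simpler, mechanism is purely combinatorial: $W_{uv}\neq 0$ only on edges, so after $s$ communication rounds a node's memory lies in the span of the time-$(t-s\tau)$ memories of nodes within graph distance $s$, hence $d(A,A_d^c)=d$ rounds are needed between consecutive useful gradient computations; the eigengap enters only through the choice of graph, via $d\approx 1/\sqrt{\gamma}$ for the line. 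You mention the diameter only parenthetically and declare the eigengap "the robust quantity," which inverts the actual logic; the repair is immediate, but as written your growth lemma rests on an invalid principle.
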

The proof of \Theorem{local_lb} relies on the same technique as that of \Theorem{global_lb}, except that we now split the two functions on a subset of a linear graph. These networks have the appreciable property that $\diamG\approx 1/\sqrt{\gamma}$, and we can thus use a slightly extended version of \Theorem{global_lb} to derive the desired result. The complete proof is available as supplementary material.

\begin{corollary}\label{cor:gossipOptimal}
For any $\gamma > 0$, there exists a gossip matrix $W$ of eigengap $\gamma$ and $\alpha$-strongly convex, $\beta$-smooth functions such that, with $\kappa_l=\beta/\alpha$, for any black-box procedure using~$W$ the time to reach a precision $\varepsilon > 0$ is lower bounded by

\BEQ
\Omega\left(\sqrt{\kappa_l}\left(1 + \frac{\tau}{\sqrt{\gamma}}\right)\ln\left(\frac{1}{\varepsilon}\right)\right).
\EEQ
\end{corollary}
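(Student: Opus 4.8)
This statement follows from \Theorem{local_lb} by inverting the exponential lower bound to solve for the elapsed time $t$. The plan is: fix $\gamma\in(0,1]$, choose $\alpha,\beta>0$ with $\kappa_l=\beta/\alpha$ large enough that $16/\sqrt{\kappa_l}\le\tfrac12$ (\eg $\kappa_l\ge 1024$), and let $W$ and $f_i:\ell_2\to\R$ be the gossip matrix and functions produced by \Theorem{local_lb} for these parameters. Since the hard instance of \Theorem{local_lb} is built once and for all, independently of the target precision, the quantities $\alpha$ and $\theta^*$ are fixed constants; moreover $\|\theta_{i,0}-\theta^*\|=\|\theta^*\|>0$ because $\theta_{i,0}=0$ and the minimizer of the explicit Nesterov-type construction is nonzero.

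Now fix any black-box procedure using $W$ and any node $i$, and suppose it reaches precision $\varepsilon>0$ at time $t$, \ie $\avgFun(\theta_{i,t})-\avgFun(\theta^*)\le\varepsilon$. Substituting into the bound of \Theorem{local_lb} and isolating the exponential gives
\BEQ
\Big(1-\tfrac{16}{\sqrt{\kappa_l}}\Big)^{1+\frac{t}{1+\tau/(5\sqrt{\gamma})}}\ \le\ \frac{2\varepsilon}{3\alpha\|\theta^*\|^2}.
\EEQ
Taking logarithms and dividing by $\ln(1-16/\sqrt{\kappa_l})<0$ (which reverses the inequality), then using $-\ln(1-x)\le 2x$ on $[0,\tfrac12]$ to bound $-\ln(1-16/\sqrt{\kappa_l})\le 32/\sqrt{\kappa_l}$, yields for $\varepsilon$ small enough
\BEQ
1+\frac{t}{1+\tau/(5\sqrt{\gamma})}\ \ge\ \frac{\ln\!\big(3\alpha\|\theta^*\|^2/(2\varepsilon)\big)}{-\ln(1-16/\sqrt{\kappa_l})}\ \ge\ \frac{\sqrt{\kappa_l}}{32}\,\ln\!\Big(\frac{3\alpha\|\theta^*\|^2}{2\varepsilon}\Big),
\EEQ
so that $t\ge\big(1+\tfrac{\tau}{5\sqrt{\gamma}}\big)\big(\tfrac{\sqrt{\kappa_l}}{32}\ln(3\alpha\|\theta^*\|^2/(2\varepsilon))-1\big)$. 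Because $\alpha$ and $\|\theta^*\|$ do not depend on $\varepsilon$, one has $\ln(3\alpha\|\theta^*\|^2/(2\varepsilon))=\Omega(\ln(1/\varepsilon))$, and $1+\tfrac{\tau}{5\sqrt{\gamma}}=\Theta(1+\tau/\sqrt{\gamma})$; omitting the $\varepsilon$-independent additive term as announced earlier, this is $t=\Omega\big(\sqrt{\kappa_l}(1+\tau/\sqrt{\gamma})\ln(1/\varepsilon)\big)$. Since the argument holds for every node $i$, it bounds the running time of the procedure.

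I do not expect any real obstacle here — the corollary is essentially a one-line consequence of \Theorem{local_lb}. The only points that require attention are the sign bookkeeping when dividing by the negative quantity $\ln(1-16/\sqrt{\kappa_l})$ (making sure the inequality flips the right way) and the verification that the multiplicative constants carried over from the construction ($\alpha$ and $\|\theta^*\|$) are independent of $\varepsilon$, so that the $\ln$ factor is genuinely $\Omega(\ln(1/\varepsilon))$ and not artificially inflated. It is also worth recording, since $\alpha$ and $\beta$ are free parameters of the construction, that the bound in fact holds for every sufficiently large value of $\kappa_l$, not just one.
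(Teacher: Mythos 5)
Your derivation is correct and is exactly the intended (implicit) argument: the paper treats \Corollary{gossipOptimal} as an immediate inversion of the exponential bound in \Theorem{local_lb}, omitting the $\varepsilon$-independent additive terms as announced in \Sec{lb}. Your extra care with the sign flip when dividing by $\ln(1-16/\sqrt{\kappa_l})$, the requirement that $\kappa_l$ be large enough for the base to be positive, and the check that $\alpha$ and $\|\theta^*\|$ are fixed constants of the hard instance are all sound and fill in precisely what the paper leaves unstated.
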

We will see in the next section that this lower bound is met for a novel decentralized algorithm called \emph{multi-step dual accelerated} (MSDA) and based on the dual formulation of the optimization problem.
Note that these results provide optimal convergence rates with respect to $\kappa_l$ and $\gamma$, but do not imply that $\gamma$ is the right quantity to consider on general graphs. The quantity $1/\sqrt{\gamma}$ may indeed be very large compared to $\diamG$, for example for star networks, for which $\diamG = 2$ and $1/\sqrt{\gamma}=\sqrt{n}$. However, on many simple networks, the diameter $\diamG$ and the eigengap of the Laplacian matrix are tightly connected, and $\diamG\approx 1/\sqrt{\gamma}$. For example, for linear graphs, $\diamG = n-1$ and $1/\sqrt{\gamma}\approx 2n/\pi$, for totally connected networks, $\diamG = 1$ and $1/\sqrt{\gamma}=1$, and for regular networks, $1/\sqrt{\gamma}\geq \frac{\diamG}{2\sqrt{2}\ln_2{n}}$ \cite{isoperimetric}.
Finally, note that the case of totally connected networks corresponds to a previous complexity lower bound on communications proven by \cite{DBLP:conf/nips/ArjevaniS15}, and is equivalent to our result for centralized algorithms with $\diamG=1$.

\section{Optimal decentralized algorithms}\label{sec:dual_alg}
In this section, we present a simple framework for solving the optimization problem in \Eq{global_opt} in a decentralized setting, from which we will derive several variants, including a  synchronized algorithm whose convergence rate matches the lower bound in \Corollary{gossipOptimal}
. Note that the naive approach of distributing each (accelerated) gradient step by gossiping does not lead to a linear convergence rate, as the number of gossip steps has to increase with the number of iterations to ensure the linear rate is preserved. We begin with the simplest form of the algorithm, before extending it to more advanced scenarios.

\subsection{Single-Step Dual Accelerated method}

A standard approach for solving \Eq{global_opt} (see \cite{boyd2011distributed,jakovetic2015linear}) consists in rewriting the optimization problem as
\BEQ
\min_{\theta\in\R^d} \avgFun(\theta) = \min_{\theta_1=\cdots=\theta_n} \frac{1}{n} \sum_{i=1}^n f_i(\theta_i).
\EEQ

Furthermore, the equality constraint $\theta_1=\cdots=\theta_n$ is equivalent to $\Theta\sqrt{W}=0$, where $\Theta=(\theta_1,\ldots,\theta_n)$ and $W$ is a gossip matrix verifying the assumptions described in \Sec{setting}. Note that, since $W$ is positive semi-definite, $\sqrt{W}$ exists and is defined as $\sqrt{W} = V^\top\Sigma^{1/2} V$, where $W=V^\top\Sigma V$ is the singular value decomposition of $W$. The equality $\Theta\sqrt{W} = 0$ implies that each row of $\Theta$ is constant (since ${\rm Ker}(\sqrt{W}) = {\rm Span}(\one)$), and is thus equivalent to $\theta_1=\cdots=\theta_n$.
This leads to the following \emph{primal version} of the optimization problem:
\BEQ\label{eq:primal_opt}
\min_{\Theta\in\R^{d\times n}~:~\Theta\sqrt{W} = 0} F(\Theta),
\EEQ
where $F(\Theta) = \sum_{i=1}^n f_i(\theta_i)$. Since \Eq{primal_opt} is a convex problem, it is equivalent to its \emph{dual optimization problem}:
\BEQ\label{eq:dual_opt}
\max_{\lambda\in\R^{d\times n}} -F^*(\lambda\sqrt{W}),
\EEQ
where $F^*(y) = \sup_{x\in\R^{d\times n}} \scprod{y}{x} - F(x)$ is the Fenchel conjugate of $F$, and $\scprod{y}{x} = {\rm tr}(y^\top x)$ is the standard scalar product between matrices.

The optimization problem in \Eq{dual_opt} is unconstrained and convex, and can thus be solved using a variety of convex optimization techniques. The proposed \emph{single-step dual accelerated} (SSDA) algorithm described in \Alg{acc_dual} uses Nesterov's accelerated gradient descent, and can be thought of as an accelerated version of the distributed augmented Lagrangian method of \cite{jakovetic2015linear} for $\rho=0$. The algorithm is derived by noting that a gradient step of size $\eta > 0$ for \Eq{dual_opt} is
\BEQ
\lambda_{t+1} = \lambda_t - \eta \nabla F^*(\lambda_t \sqrt{W})\sqrt{W},
\EEQ
and the change of variable $y_t = \lambda_t \sqrt{W}$ leads to
\BEQ
y_{t+1} = y_t - \eta \nabla F^*(y_t)W.
\EEQ
This equation can be interpreted as gossiping the gradients of the local conjugate functions $\nabla f_i^*(y_{i,t})$, since $\nabla F^*(y_t)_{ij} = \nabla f_j^*(y_{j,t})_i$.

\begin{algorithm}[t]
\caption{Single-Step Dual Accelerated method}
\label{alg:acc_dual}
\begin{algorithmic}[1]
	\REQUIRE{number of iterations $T>0$, gossip matrix $W\in\R^{n\times n}$, $\eta = \frac{\alpha}{\lambda_1(W)}$, $\mu = \frac{\sqrt{\kappa_l} - \sqrt{\gamma}}{\sqrt{\kappa_l} + \sqrt{\gamma}}$}
  \ENSURE{$\theta_{i,T}$, for $i=1,...,n$}
  \STATE $x_0 = 0$, $y_0 = 0$
  \FOR{$t = 0$ to $T-1$}
		\STATE $\theta_{i,t} = \nabla f_i^*(x_{i,t})$, for all $i = 1,...,n$
		\STATE $y_{t+1} = x_t - \eta \Theta_t W$
		\STATE $x_{t+1} = (1+\mu)y_{t+1} - \mu y_t$
  \ENDFOR
\end{algorithmic}
\end{algorithm}

\begin{theorem}\label{th:cv_SSDA}
The iterative scheme in \Alg{acc_dual} converges to $\Theta = {\theta^*}\one^\top$ where $\theta^*$ is the solution of \Eq{global_opt}. Furthermore, the time needed for this algorithm to reach any given precision $\varepsilon > 0$ is
\BEQ
O\left((1+\tau)\sqrt{\frac{\kappa_l}{\gamma}}\ln\left(\frac{1}{\varepsilon}\right)\right).
\EEQ
\end{theorem}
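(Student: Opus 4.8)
The plan is to recognize \Alg{acc_dual} as Nesterov's accelerated gradient descent applied to the dual problem \Eq{dual_opt}, and to show that the relevant condition number of that dual problem is exactly $\kappa_l/\gamma$, so that the classical accelerated rate translates into the announced bound.

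First I would record the conjugation facts. Since each $f_i$ is $\alpha$-strongly convex and $\beta$-smooth, the block-separable function $F(\Theta)=\sum_i f_i(\theta_i)$ is $\alpha$-strongly convex and $\beta$-smooth on $\R^{d\times n}$, hence $F^*$ is $\frac1\beta$-strongly convex and $\frac1\alpha$-smooth, with $\nabla F^*$ being $\frac1\alpha$-Lipschitz. Next, consider the dual objective $g(\lambda)=F^*(\lambda\sqrt W)$, whose gradient is $\nabla g(\lambda)=\nabla F^*(\lambda\sqrt W)\sqrt W$. Restrict attention to the subspace $\mathcal S=\{y\in\R^{d\times n}:\ y\one=0\}$, i.e.\ the set of matrices whose rows are orthogonal to $\one$; this is exactly the range of the (row-wise) map $y\mapsto y\sqrt W$, on which $\sqrt W$ acts invertibly with singular values in $[\sqrt{\lambda_{n-1}(W)},\sqrt{\lambda_1(W)}]$ (using ${\rm Ker}(W)={\rm Span}(\one)$). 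Combining this with the strong convexity and smoothness of $F^*$ via the standard composition/chain-rule estimate for Hessians, $g$ restricted to $\mathcal S$ is $\sigma$-strongly convex and $L$-smooth with $\sigma=\lambda_{n-1}(W)/\beta$ and $L=\lambda_1(W)/\alpha$, so that $L/\sigma=\kappa_l/\gamma$.

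Then I would observe that the change of variables $y_t=\lambda_t\sqrt W$ turns the accelerated gradient iteration on $g$ into precisely the recursion of \Alg{acc_dual}: the step size $\eta=\alpha/\lambda_1(W)=1/L$ and the momentum parameter $\mu=(\sqrt{\kappa_l}-\sqrt\gamma)/(\sqrt{\kappa_l}+\sqrt\gamma)=(\sqrt{L/\sigma}-1)/(\sqrt{L/\sigma}+1)$ are exactly Nesterov's constant-scheme choices for a function of condition number $L/\sigma$. Since $x_0=y_0=0\in\mathcal S$ and each update (a multiplication by $W$ on the right, then a linear combination) preserves membership in $\mathcal S$, the classical analysis of accelerated gradient descent for $\sigma$-strongly convex, $L$-smooth functions applies verbatim on $\mathcal S$: the usual Lyapunov argument gives $\|x_t-y_\star\|^2=O\big((1-\sqrt{\gamma/\kappa_l})^{\,t}\big)$, where $y_\star\in\mathcal S$ is the unique minimizer of $F^*$ over $\mathcal S$, which coincides with $\lambda^\star\sqrt W$ for any dual optimum $\lambda^\star$. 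For primal recovery, strong convexity of $F$ yields strong duality for the linearly constrained problem \Eq{primal_opt} together with the KKT relation $\nabla F(\Theta^\star)=y_\star$, equivalently $\Theta^\star=\nabla F^*(y_\star)$; the constraint $\Theta^\star\sqrt W=0$ forces $\Theta^\star=\theta^\star\one^\top$ with $\theta^\star$ the solution of \Eq{global_opt}. Since line 3 of \Alg{acc_dual} sets $\Theta_t=\nabla F^*(x_t)$ and $\nabla F^*$ is $\frac1\alpha$-Lipschitz, $\|\Theta_t-\theta^\star\one^\top\|\le\frac1\alpha\|x_t-y_\star\|\to0$ geometrically at rate $(1-\sqrt{\gamma/\kappa_l})$, hence every output $\theta_{i,T}$ converges to $\theta^\star$; smoothness of $\avgFun$ then gives $\avgFun(\theta_{i,t})-\avgFun(\theta^\star)\le\frac{\beta_g}{2}\|\theta_{i,t}-\theta^\star\|^2$ decaying at the same rate, so $T=O(\sqrt{\kappa_l/\gamma}\,\ln(1/\varepsilon))$ iterations suffice. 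Each iteration consists of one parallel round of local evaluations $\nabla f_i^*$ (one unit of time) and one gossip step $\Theta_t W$ (one communication round of time $\tau$), the extrapolation being purely local, so the total running time is $O\big((1+\tau)\sqrt{\kappa_l/\gamma}\,\ln(1/\varepsilon)\big)$.

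The main obstacle is the first step: handling the degeneracy of $\sqrt W$ cleanly — isolating the subspace $\mathcal S$ on which the dual objective is genuinely strongly convex, pinning down the constants $\sigma$ and $L$ (hence the factor $\kappa_l/\gamma$), and confirming that the accelerated method's convergence guarantees are unaffected by the flat kernel direction, which holds precisely because the iterates, started from $0$, never leave $\mathcal S$. The remaining pieces — the conjugation inequalities, the identification with Nesterov's recursion after the change of variables, and the Lipschitz-based primal recovery — are routine.
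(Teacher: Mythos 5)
Your proposal is correct and follows essentially the same route as the paper's proof: identify \Alg{acc_dual} as Nesterov's accelerated method on the dual, bound the condition number on the image of $\sqrt{W}\otimes I_d$ (the subspace of matrices with rows orthogonal to $\one$, which the iterates never leave) by $\kappa_l/\gamma$ via the spectrum of $\nabla^2 F^*$ sandwiched between $\sqrt{W}$ factors, and account for one gradient round plus one gossip round per iteration. Your write-up simply fills in details the paper leaves implicit (the conjugate smoothness/strong-convexity constants, the verification of the step-size and momentum parameters, and the Lipschitz-based primal recovery), so it is a more complete rendering of the same argument.
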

This theorem relies on proving that the condition number of the dual objective function is upper bounded by $\frac{\kappa_l}{\gamma}$, and noting that the convergence rate for accelerated gradient descent depends on the square root of the condition number (see, e.g., \cite{bubeck2015convex}). A detailed proof is available as supplementary material.

\subsection{Multi-Step Dual Accelerated method}
The main problem of \Alg{acc_dual} is that it always performs the same number of gradient and gossip steps. When communication is cheap compared to local computations ($\tau \ll 1$), it would be preferable to perform more gossip steps than gradient steps in order to propagate the local gradients further than the local neighborhoods of each node. This can be achieved by replacing $W$ by $P_K(W)$ in \Alg{acc_dual}, where $P_K$ is a polynomial of degree at most $K$. 
If $P_K(W)$ is itself a gossip matrix, then the analysis of the previous section can be applied and the convergence rate of the resulting algorithm depends on the eigengap of $P_K(W)$.
Maximizing this quantity for a fixed $K$ leads to a common acceleration scheme known as  Chebyshev acceleration \cite{chebyshevAcc,Arioli:2014:CAI:2638909.2639021} and the choice
\BEQ
P_K(x) = 1 - \frac{T_K(c_2(1-x))}{T_K(c_2)},
\EEQ
where $c_2=\frac{1+\gamma}{1-\gamma}$ and $T_K$ are the Chebyshev polynomials \cite{chebyshevAcc} defined as $T_0(x) = 1$, $T_1(x) = x$, and, for all $k\geq 1$,
\BEQ
T_{k+1}(x) = 2xT_k(x)-T_{k-1}(x).
\EEQ
Finally, verifying that this particular choice of $P_K(W)$ is indeed a gossip matrix, and taking $K=\lfloor\frac{1}{\sqrt{\gamma}}\rfloor$ leads to \Alg{multi_acc_dual} with an optimal convergence rate with respect to $\gamma$ and $\kappa_l$.
\begin{theorem}\label{th:MSDA_cv}
The iterative scheme in \Alg{multi_acc_dual} converges to $\Theta = {\theta^*}\one^\top$ where $\theta^*$ is the solution of \Eq{global_opt}. Furthermore, the time needed for this algorithm to reach any given precision $\varepsilon > 0$ is
\BEQ
O\left(\sqrt{\kappa_l}\left(1+\frac{\tau}{\sqrt{\gamma}}\right)\ln\left(\frac{1}{\varepsilon}\right)\right).
\EEQ
\end{theorem}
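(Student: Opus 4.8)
The plan is to recognize \Alg{multi_acc_dual} as the SSDA scheme of \Alg{acc_dual} run on the modified matrix $\widetilde W := P_K(W)$ with $K=\lfloor 1/\sqrt\gamma\rfloor$, the multiplication by $\widetilde W$ being performed through the three-term Chebyshev recurrence. The statement then reduces to \Theorem{cv_SSDA} applied to $\widetilde W$, once we establish: (i) $\widetilde W$ satisfies the spectral axioms of a gossip matrix; (ii) its eigengap $\widetilde\gamma := \gamma(\widetilde W)$ is bounded below by a universal constant $\gamma_0>0$; and (iii) one iteration of \Alg{multi_acc_dual} costs $1+K\tau$ units of time instead of the $1+\tau$ of \Alg{acc_dual}.

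For (i)--(ii) I would use only the elementary facts $T_K(\cosh u)=\cosh(Ku)$ and $|T_K|\le 1$ on $[-1,1]$. First $P_K(0)=1-T_K(c_2)/T_K(c_2)=0$, so ${\rm Ker}(W)\subseteq{\rm Ker}(\widetilde W)$. Second, by the choice $c_2=\frac{1+\gamma}{1-\gamma}$ — which maps the interval of nonzero eigenvalues of the (suitably normalized) matrix $W$ onto $[-1,1]$ under $x\mapsto c_2(1-x)$ — every nonzero eigenvalue $\lambda$ of $W$ has $c_2(1-\lambda)\in[-1,1]$, hence $|1-P_K(\lambda)|=|T_K(c_2(1-\lambda))|/T_K(c_2)\le 1/T_K(c_2)$. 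Writing $c_2=\cosh u$ gives $u=\ln\frac{1+\sqrt\gamma}{1-\sqrt\gamma}=2\,{\rm arctanh}(\sqrt\gamma)\ge 2\sqrt\gamma$, so $T_K(c_2)=\cosh(Ku)\ge\cosh(2K\sqrt\gamma)$. If $\gamma\le \tfrac14$ then $K=\lfloor 1/\sqrt\gamma\rfloor\ge \tfrac{1}{2\sqrt\gamma}$, hence $T_K(c_2)\ge\cosh(1)>1$; if $\gamma>\tfrac14$ then $K=1$ and $P_1(x)=x$, so $\widetilde W=W$ and $\widetilde\gamma=\gamma>\tfrac14$ directly. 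In the first case $P_K(\lambda)\in[1-\delta_0,1+\delta_0]$ for every nonzero eigenvalue $\lambda$ of $W$, with $\delta_0:=1/\cosh(1)<1$; therefore $\widetilde W$ is symmetric, positive semi-definite and has kernel exactly ${\rm Span}(\one)$, and $\widetilde\gamma=\min_\lambda P_K(\lambda)/\max_\lambda P_K(\lambda)\ge\frac{1-\delta_0}{1+\delta_0}=:\gamma_0=\Omega(1)$. (The entrywise support of $\widetilde W$ is that of the $K$-th power of $\mathcal G$ rather than of $\mathcal G$ itself; this is precisely what makes (iii) the cost to pay, but it does not prevent $\widetilde W$ from being usable in the black-box model.)

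For (iii), $P_K(W)v$ is obtained by running $t_0=v$, $t_1=Mv$, $t_{k+1}=2Mt_k-t_{k-1}$ on $M=c_2(I-W)$ and returning $v-t_K/T_K(c_2)$ (the scalars $T_k(c_2)$ being produced by the same recurrence); this uses $K$ matrix-vector products with $W$, i.e. $K$ rounds of communication along the edges of $\mathcal G$ (cost $K\tau$), plus negligible local work. Hence one iteration of \Alg{multi_acc_dual} is one batch of parallel evaluations $\theta_{i,t}=\nabla f_i^*(x_{i,t})$ (cost $1$) followed by one $\widetilde W$-gossip step (cost $K\tau$), for a total of $1+K\tau$. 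Now \Theorem{cv_SSDA} applied with gossip matrix $\widetilde W$ — for which the local condition number is still $\kappa_l$ and the relevant eigengap is $\widetilde\gamma\ge\gamma_0$ — shows that \Alg{multi_acc_dual} converges to $\Theta=\theta^*\one^\top$ and that $O\big(\sqrt{\kappa_l/\widetilde\gamma}\,\ln(1/\varepsilon)\big)=O\big(\sqrt{\kappa_l}\,\ln(1/\varepsilon)\big)$ iterations reach precision $\varepsilon$. Multiplying by the per-iteration cost and using $K=\lfloor 1/\sqrt\gamma\rfloor\le 1/\sqrt\gamma$ yields total time $O\big(\sqrt{\kappa_l}(1+K\tau)\ln(1/\varepsilon)\big)=O\big(\sqrt{\kappa_l}(1+\tau/\sqrt\gamma)\ln(1/\varepsilon)\big)$.

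The one genuinely non-mechanical point is (ii): that the Chebyshev degree $K=\lfloor 1/\sqrt\gamma\rfloor$ is exactly large enough to lift the eigengap of $P_K(W)$ to a constant, i.e. the estimate $T_{\lfloor 1/\sqrt\gamma\rfloor}\!\big(\tfrac{1+\gamma}{1-\gamma}\big)\ge\cosh(1)$, together with the separate elementary handling of $\gamma$ near $1$. One must also fix the normalization of $W$ so that its nonzero spectrum sits where the stated $P_K$ expects it (if one instead keeps $\lambda_1(W)=1$, the argument of $T_K$ on the spectrum can slightly exceed $1$ and one needs a bound of the form $T_K(1+\gamma)/T_K(c_2)\le$ const $<1$, which again follows from monotonicity of $\cosh$), and be careful to charge the complexity model $K\tau$ rather than $\tau$ for a gossip step through $P_K(W)$. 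Everything else is a direct consequence of \Theorem{cv_SSDA}.
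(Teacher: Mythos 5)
Your proposal is correct and follows essentially the same route as the paper: view \Alg{multi_acc_dual} as SSDA run on $P_K(W)$, show that for $K=\lfloor 1/\sqrt{\gamma}\rfloor$ the eigengap of $P_K(W)$ is bounded below by a universal constant, charge $1+K\tau$ per iteration, and invoke \Theorem{cv_SSDA}. The only difference is presentational: where the paper cites Theorem~6.2 and Corollary~6.3 of the Chebyshev-acceleration reference to get $\gamma(P_K(W))\geq\big(\tfrac{1-c_1^K}{1+c_1^K}\big)^2$, you rederive the same spectral bound directly from $T_K(\cosh u)=\cosh(Ku)$ and $|T_K|\le 1$ on $[-1,1]$, which makes the argument self-contained but is mathematically the same estimate.
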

The proof of \Theorem{MSDA_cv} relies on standard properties of Chebyshev polynomials that imply that, for the particular choice of $K=\lfloor\frac{1}{\sqrt{\gamma}}\rfloor$, we have $\frac{1}{\sqrt{\gamma(P_K(W))}} \leq 2$. Hence, \Theorem{cv_SSDA} applied to the gossip matrix $W' = P_K(W)$ gives the desired convergence rate. The complete proof is available as supplementary material.

\begin{algorithm}[t]
\caption{Multi-Step Dual Accelerated method}
\label{alg:multi_acc_dual}
\begin{algorithmic}[1]
	\REQUIRE{number of iterations $T>0$, gossip matrix $W\in\R^{n\times n}$, $c_1=\frac{1-\sqrt{\gamma}}{1+\sqrt{\gamma}}$, $c_2=\frac{1+\gamma}{1-\gamma}$, $c_3=\frac{2}{(1+\gamma)\lambda_1(W)}$, $K = \left\lfloor\frac{1}{\sqrt{\gamma}}\right\rfloor$, $\eta = \frac{\alpha(1+c_1^{2K})}{(1+c_1^K)^2}$, $\mu = \frac{(1+c_1^K)\sqrt{\kappa_l} - 1 + c_1^K}{(1+c_1^K)\sqrt{\kappa_l} + 1 - c_1^K}$}
  \ENSURE{$\theta_{i,T}$, for $i=1,...,n$}
  \STATE $x_0 = 0$, $y_0 = 0$
  \FOR{$t = 0$ to $T-1$}
		\STATE $\theta_{i,t} = \nabla f_i^*(x_{i,t})$, for all $i = 1,...,n$
		\STATE $y_{t+1} = x_t - \eta$ \textsc{acceleratedGossip}($\Theta_t$,$W$,$K$)
		\STATE $x_{t+1} = (1+\mu)y_{t+1} - \mu y_t$
  \ENDFOR
	\vspace{0.5em}
	\STATE \textbf{procedure} \textsc{acceleratedGossip}($x$,$W$,$K$)
	\STATE $a_0 = 1$,	$a_1 = c_2$
	\STATE $x_0 = x$, $x_1 = c_2x(I - c_3W)$
	\FOR{$k = 1$ to $K-1$}
		\STATE $a_{k+1} = 2c_2a_k - a_{k-1}$
		\STATE $x_{k+1} = 2c_2x_k(I - c_3W) - x_{k-1}$
  \ENDFOR
	\STATE \textbf{return} $x_0 - \frac{x_K}{a_K}$
	\STATE \textbf{end procedure}
\end{algorithmic}
\end{algorithm}

\subsection{Discussion and further developments}
We now discuss several extensions to the proposed algorithms.

\BNUM
\item \textbf{Computation of $\nabla f_i^*(x_{i,t})$:} In practice, it may be hard to apply the dual algorithm when  conjugate functions are hard to compute. We now provide three potential solutions to this problem: (1) \emph{warm starts} may be used for the optimization problem $\nabla f_i^*(x_{i,t}) = \argmin_\theta f_i(\theta) - x_{i,t}^\top\theta$ by starting from the previous iteration $\theta_{i,t-1}$. This will drastically reduce the number of steps required for convergence. (2) SSDA and MSDA can be extended to composite functions of the form $f_i(\theta) = g_i(B_i \theta) + c \|\theta\|_2^2 $ for $B_i \in \R^{m_i \times d}$ and $g_i$ smooth, and for which we know how to compute the proximal operator. This allows applications in machine learning such as logistic regression. See supplementary material for details. (3) 
Beyond the composite case, one can also add a small (well-chosen) quadratic term to the dual, and by applying accelerated gradient descent on the corresponding primal, get an algorithm that uses primal gradient computations and achieves almost the same guarantee as SSDA and MSDA (off by a $\log(\kappa_l/\gamma)$ factor).

\item \textbf{Local vs. global condition number:} MSDA and SSDA depend on the worst strong convexity of the local functions $\min_i \alpha_i$, which may be very small. A simple trick can be used to depend on the \emph{average} strong convexity. Using the proxy functions $g_i(\theta) = f_i(\theta) - (\alpha_i - \bar{\alpha})\|\theta\|_2^2$ instead of $f_i$, where $\bar{\alpha} = \frac{1}{n}\sum_i \alpha_i$ is the average strong convexity, will improve the local condition number from $\kappa_l = \frac{\max_i \beta_i}{\min_i \alpha_i}$ to

\BEQ
\kappa_l' = \frac{\max_i \beta_i - \alpha_i}{\bar{\alpha}} - 1.
\EEQ

Several algorithms, including EXTRA \cite{doi:10.1137/14096668X} and DIGing \cite{2016arXiv160703218N}, have convergence rates that depend on the strong convexity of the global function $\alpha_g$. However, their convergence rates are not optimal, and it is still an open question to know if a rate close to $O\left(\sqrt{\kappa_g}(1+\frac{\tau}{\sqrt{\gamma}})\ln(1/\varepsilon)\right)$ can be achieved with a decentralized algorithm.

\item \textbf{Asynchronous setting:} Accelerated stochastic gradient descent such as SVRG \cite{NIPS2013_4937} or SAGA \cite{defazio2014saga} can be used on the dual problem in \Eq{dual_opt} instead of accelerated gradient descent, in order to obtain an asynchronous algorithm with a linear convergence rate. The details and exact convergence rate of such an approach are left as future work.
\ENUM

\section{Experiments}\label{sec:exps}
In this section, we compare our new algorithms, single-step dual accelerated (SSDA) descent and multi-step dual accelerated (MSDA) descent, to standard distributed optimization algorithms in two settings: least-squares regression and  classification by logistic regression. Note that these experiments on simple generated datasets are made to assess the differences between existing state-of-the-art algorithms and the ones provided in \Sec{dual_alg}, and do not address the practical implementation details nor the efficiency of the compared algorithms on real-world distributed platforms. The effect of latency, machine failures or variable communication time is thus left for future work.
\subsection{Competitors and setup}
We compare SSDA and MSDA to four state-of-the-art distributed algorithms that achieve linear convergence rates:
distributed ADMM (D-ADMM) \cite{shi2014linear}, EXTRA \cite{doi:10.1137/14096668X}, a recent approach named DIGing \cite{2016arXiv160703218N}, and the distributed version of accelerated gradient descent (DAGD) described in \Sec{lb_centralized} and shown to be optimal among centralized algorithms.
When available in the literature, we used the optimal parameters for each algorithm (see Theorem 2 by \cite{shi2014linear} for D-ADMM and Remark 3 by \cite{doi:10.1137/14096668X} for EXTRA). For the DIGing algorithm, the parameters provided by \cite{2016arXiv160703218N} are very conservative, and lead to a very slow convergence. We thus manually optimized the parameter for this algorithm.
The experiments are simulated using a generated dataset consisting of $10,000$ samples randomly distributed to the nodes of a network of size $100$. In order to assess the effect of the connectivity of the network, we ran each experiment on two networks: one $10\times 10$ grid and an \Erdos random network with parameter $p=\frac{6}{100}$ (\ie of average degree $6$). The quality metric used in this section is be the maximum approximation error among the nodes of the network
\BEQ
e_t = \max_{i\in\mathcal{V}} \avgFun(\theta_{i,t}) - \avgFun(\theta^*),
\EEQ
where $\theta^*$ is the optimal parameter of the optimization problem in \Eq {global_opt}.

\subsection{Least-squares regression}

\begin{figure}[t]
	\centering
	\subfigure[high communication time: {$\tau=10$}]{
		\includegraphics[viewport=0 0 278 196, clip=true, width=0.45\textwidth]{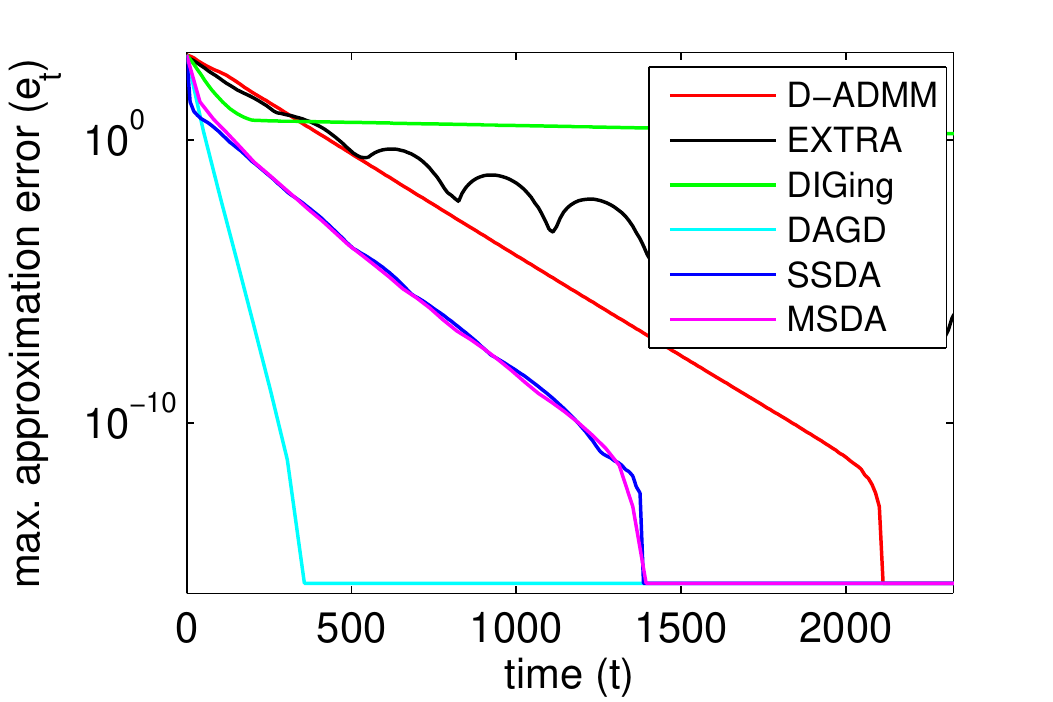}
	}
	\subfigure[low communication time: {$\tau=0.1$}]{
		\includegraphics[viewport=0 0 278 196, clip=true, width=0.45\textwidth]{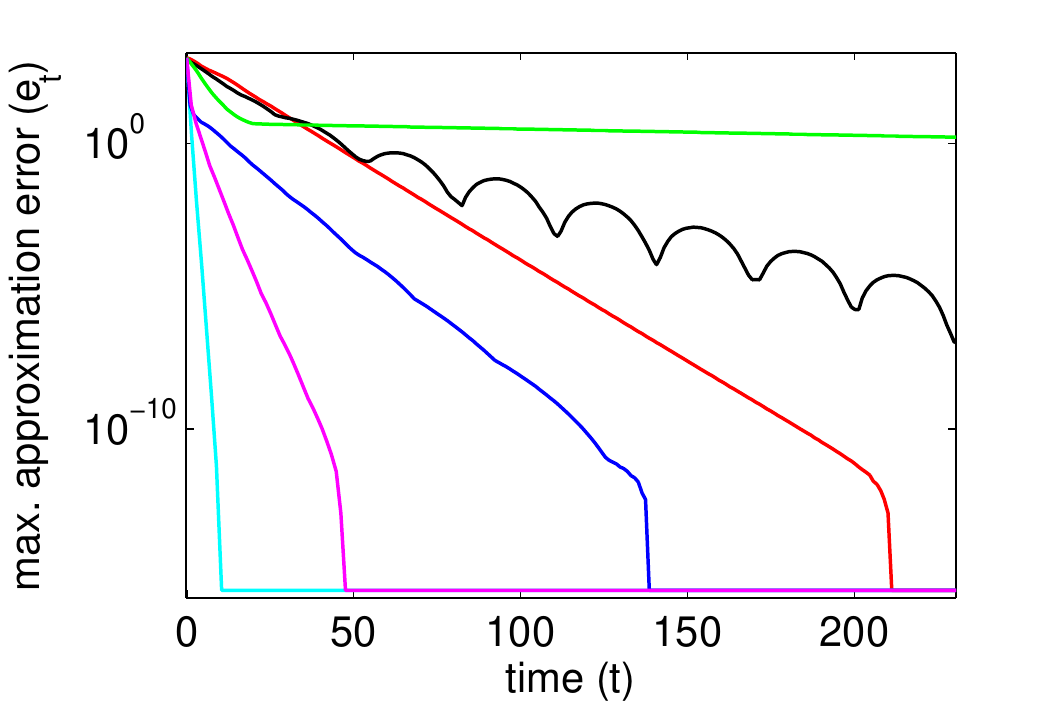}
	}
	\caption{Maximum approximation error for least-squares regression on an \Erdos random network of average degree $6$ ($n=100$).}
	\label{fig:lsr1}
\end{figure}

\begin{figure}[t]
	\centering
	\subfigure[high communication time: {$\tau=10$}]{
		\includegraphics[viewport=0 0 278 196, clip=true, width=0.45\textwidth]{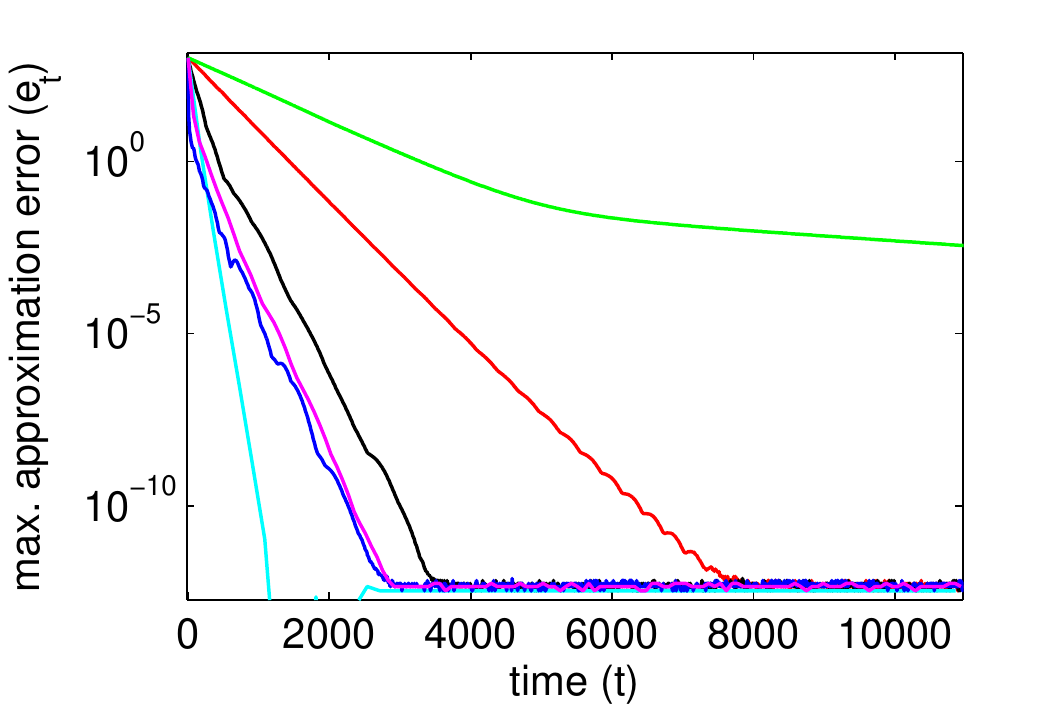}
	}
	\subfigure[low communication time: {$\tau=0.1$}]{
		\includegraphics[viewport=0 0 278 196, clip=true, width=0.45\textwidth]{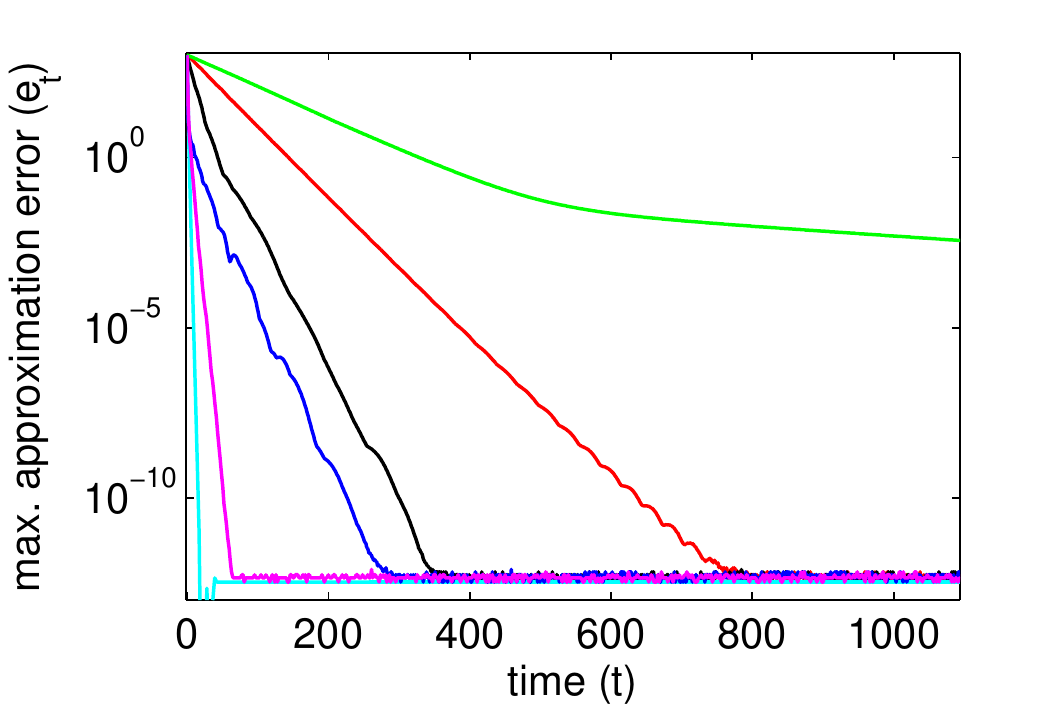}
	}
	\caption{Maximum approximation error for least-squares regression on a $10\times 10$ grid graph ($n=100$).}
	\label{fig:lsr2}
\end{figure}

The \emph{regularized least-squares regression} problem consists in solving the optimization problem
\BEQ
\min_{\theta\in\R^d} \frac{1}{m} \|y - X^\top\theta\|_2^2 + c\|\theta\|_2^2,
\EEQ
where $X\in\R^{d\times m}$ is a matrix containing the $m$ data points, and $y\in\R^m$ is a vector containing the $m$ associated values. The task is thus to minimize the empirical quadratic error between a function $y_i=g(X_i)$ of $d$ variables and its linear regression $\hat{g}(X_i) = X_i^\top\theta$ on the original dataset (for $i=1,...,m$), while smoothing the resulting approximation by adding a regularizer $c\|\theta\|_2^2$. For our experiments, we fixed $c=0.1$, $d=10$, and sampled $m=10,000$ Gaussian random variables $X_i\sim\mathcal{N}(0,1)$ of mean $0$ and variance $1$. The function to regress is then $y_i=X_i^\top\one + \cos(X_i^\top\one) + \xi_i$ where $\xi_i\sim\mathcal{N}(0,1/4)$ is an i.i.d.~Gaussian noise of variance $1/4$. These data points are then distributed randomly and evenly to the $n=100$ nodes of the network. Note that the choice of function to regress $y$ does not impact the Hessian of the objective function, and thus the convergence rate of the optimization algorithms.

\Fig{lsr1} and \Fig{lsr2} show the performance of the compared algorithms on two networks: a $10\times 10$ grid graph and an \Erdos random graph of average degree $6$. All algorithms are linearly convergent, although their convergence rates scale on several orders of magnitude. In all experiments, the centralized optimal algorithm DAGD has the best convergence rate, while MSDA has the best convergence rate among decentralized methods. When the communication time is smaller than the computation time ($\tau\gg 1$), performing several communication rounds per gradient iteration will improve the efficiency of the algorithm and MSDA substantially outperforms SSDA.

\subsection{Logistic classification}

\begin{figure}[t]
	\centering
	\subfigure[high communication time: {$\tau=10$}]{
		\includegraphics[viewport=0 0 278 196, clip=true, width=0.45\textwidth]{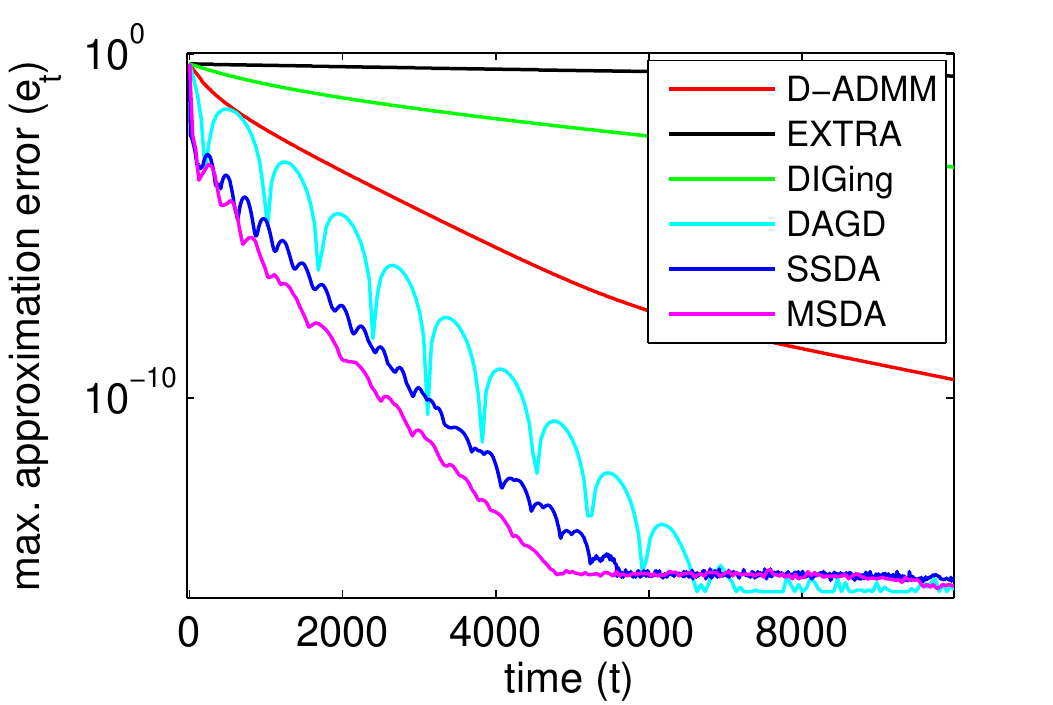}
	}
	\subfigure[low communication time: {$\tau=0.1$}]{
		\includegraphics[viewport=0 0 278 196, clip=true, width=0.45\textwidth]{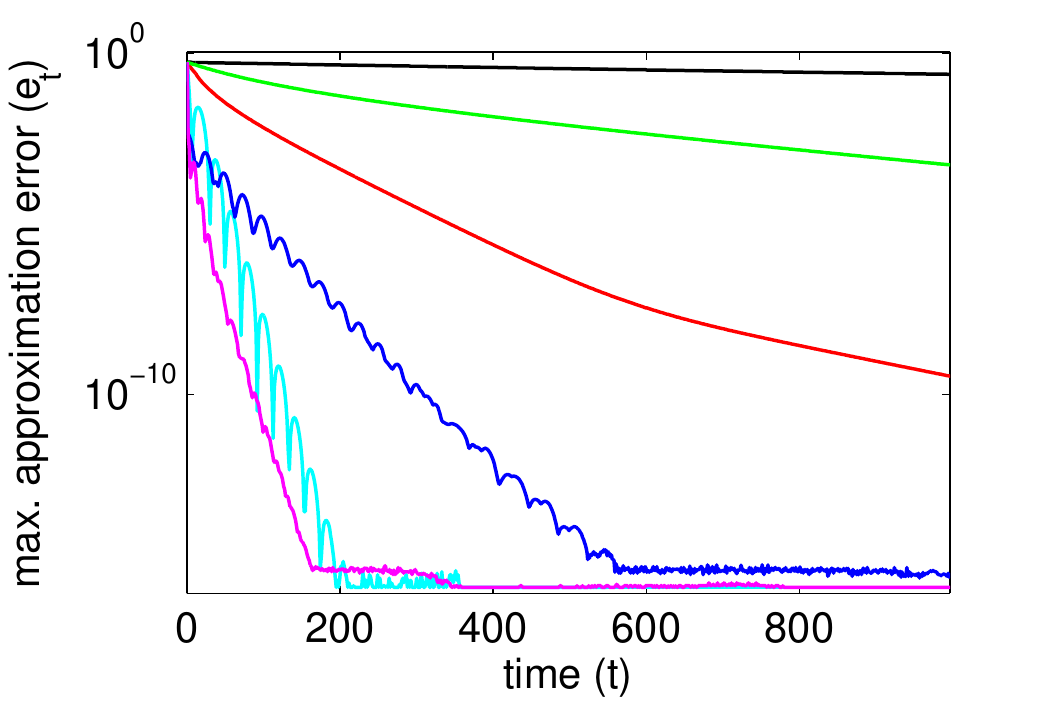}
	}
	\caption{Maximum approximation error for logistic classification on an \Erdos random network of average degree $6$ ($n=100$).}
	\label{fig:lc1}
\end{figure}

\begin{figure}[t]
	\centering
	\subfigure[high communication time: {$\tau=10$}]{
		\includegraphics[viewport=0 0 278 196, clip=true, width=0.45\textwidth]{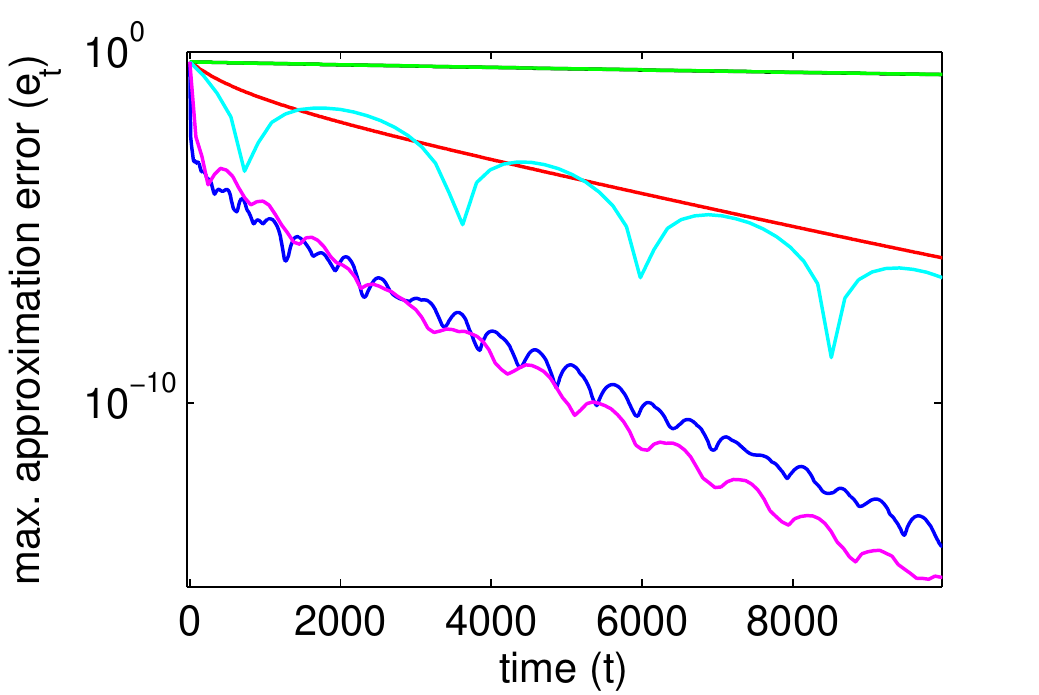}
	\label{fig:lc2a}
	}
	\subfigure[low communication time: {$\tau=0.1$}]{
		\includegraphics[viewport=0 0 278 196, clip=true, width=0.45\textwidth]{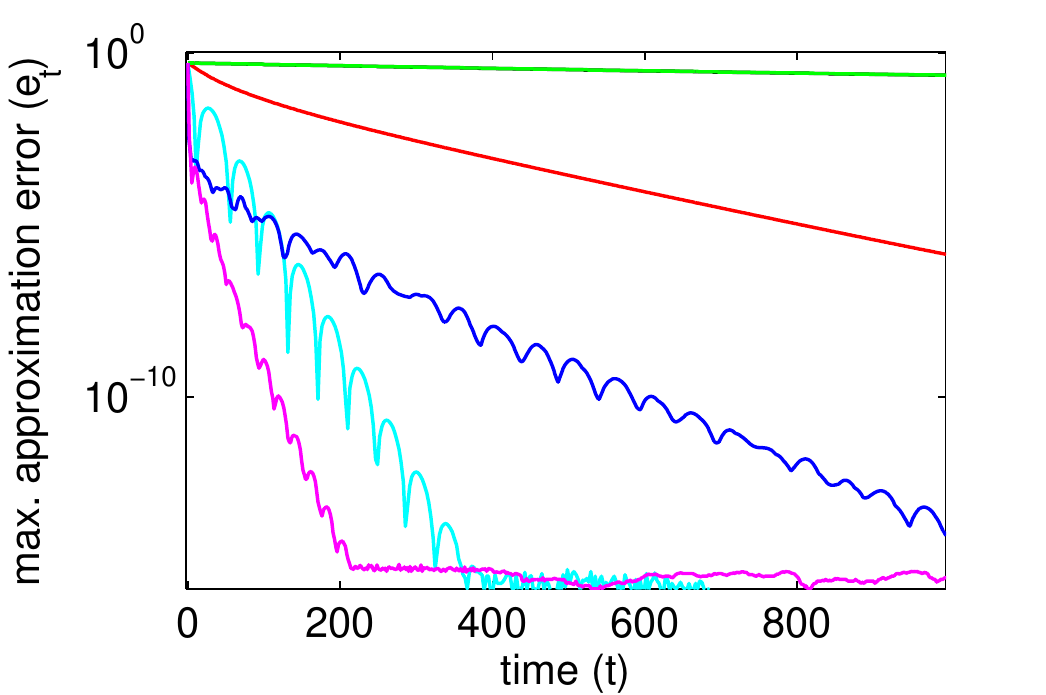}
	}
	\caption{Maximum approximation error for logistic classification on a $10\times 10$ grid graph ($n=100$).}
	\label{fig:lc2}
\end{figure}

The \emph{logistic classification} problem consists in solving the optimization problem
\BEQ
\min_{\theta\in\R^d} \frac{1}{m}\sum_{i=1}^m\ln\left(1 + e^{-y_i \cdot X_i^\top\theta}\right) + c\|\theta\|_2^2,
\EEQ
where $X\in\R^{d\times m}$ is a matrix containing $m$ data points, and $y\in\{-1,1\}^m$ is a vector containing the $m$ class assignments. The task is thus to classify a dataset by learning a linear classifier mapping data points $X_i$ to their associated class $y_i\in\{-1,1\}$. For our experiments, we fixed $c=0.1$, $d=10$, and sampled $m=10,000$ data points, $5,000$ for the first class and $5,000$ for the second. Each data point $X_i\sim\mathcal{N}(y_i\one,1)$ is a Gaussian random variable of mean $y_i\one$ and variance $1$, where $y_i=2\one\{i\leq 5,000\}-1$ is the true class of $X_i$. These data points are then distributed randomly and evenly to the $n=100$ nodes of the network.

\Fig{lc1} and \Fig{lc2} show the performance of the compared algorithms for logistic classification on two networks: a $10\times 10$ grid graph and an \Erdos random graph of average degree $6$. As for least-squares regression, all algorithms are linearly convergent, and their convergence rates scale on several orders of magnitude. In this case, the centralized optimal algorithm DAGD is outperformed by MSDA, although the two convergence rates are relatively similar. Again, when the communication time is smaller than the computation time ($\tau\gg 1$), performing several communication rounds per gradient iteration will improve the efficiency of the algorithm and MSDA substantially outperforms SSDA.
Note that, in \Fig{lc2a}, D-ADMM requires $383$ iterations to reach the same error obtained after only $10$ iterations of SSDA, demonstrating a substantial improvement over state-of-the-art methods.

\section{Conclusion}
In this paper, we derived optimal convergence rates for strongly convex and smooth distributed optimization in two settings: centralized and decentralized communications in a network.
For the decentralized setting, we introduced the \emph{multi-step dual accelerated} (MSDA) algorithm with a provable optimal linear convergence rate, and showed its high efficiency compared to other state-of-the-art methods, including distributed ADMM and EXTRA. The simplicity of the approach makes the algorithm extremely flexible, and allows for future extensions, including time-varying networks and an analysis for non-strongly-convex functions. Finally, extending our complexity lower bounds to time delays, variable computational speeds of local systems, or machine failures would be a notable addition to this work.

\appendix

\section{Detailed proofs}
\subsection{Complexity lower bounds}
\begin{proof}[Proof of Theorem 1]
This proof relies on splitting the function used by Nesterov to prove oracle complexities for strongly convex and smooth optimization \cite{nesterov2004introductory,bubeck2015convex}. Let $\beta\geq\alpha>0$, $\mathcal{G}=(\mathcal{V},\mathcal{E})$ a graph and $A\subset\mathcal{V}$ a set of nodes of $\mathcal{G}$. For all $d>0$, we denote as $A_d^c = \{v\in\mathcal{V}~:~d(A,v)\geq d\}$ the set of nodes at distance at least $d$ from $A$, and let, for all $i\in\mathcal{V}$, $f^A_i:\ell_2\rightarrow\R$ be the functions defined as:
\BEQ
\!\!\!\!\!\!f^A_i(\theta) = \left\{\!\!\!
\BA{ll}
\frac{\alpha}{2n}\|\theta\|_2^2 + \frac{\beta - \alpha}{8|A|}(\theta^\top M_1 \theta - \theta_1) \!\!\!\!&\mbox{if }i\in A\\
\frac{\alpha}{2n}\|\theta\|_2^2 + \frac{\beta - \alpha}{8|A_d^c|} \theta^\top M_2 \theta &\mbox{if }i\in A_d^c\\
\frac{\alpha}{2n}\|\theta\|_2^2 &\mbox{otherwise}\\
\EA\right.
\EEQ
where $M_1:\ell_2\rightarrow\ell_2$ is the infinite block diagonal matrix with $\Big(\BA{cc}1 &-1\\-1 &1\EA\Big)$ on the diagonal, and $M_2 = \Big(\BA{cc}1 &0\\0 &M_1\EA\Big)$.
First, note that, since $0 \preceq M_1+M_2 \preceq 4I$, $\avgFun^A=\frac{1}{n} \sum_{i=1}^n f^A_i$ is $\alpha$-strongly convex and $\beta$-smooth. Then, Theorem 1 is a direct consequence of the following lemma:
\begin{lemma}\label{lem:A_lb}
If $A_d^c\neq\emptyset$, then for any $t\geq 0$ and any black-box procedure one has, for all $i\in\{1,...,n\}$,
\BEQ
\avgFun^A(\theta_{i,t}) - \avgFun^A(\theta^*) \geq \frac{\alpha}{2}\left(\frac{\sqrt{\kappa_g} - 1}{\sqrt{\kappa_g} + 1}\right)^{2\left(1+\frac{t}{1+d\tau}\right)} \|\theta_{i,0} - \theta^*\|^2,
\EEQ
where $\kappa_g = \beta / \alpha$.
\end{lemma}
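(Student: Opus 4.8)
The plan is to recognize that $\avgFun^A$ is (a rescaling of) Nesterov's classical worst-case function for $\alpha$-strongly convex $\beta$-smooth minimization — note that $M_1+M_2$ is the tridiagonal matrix with $2$ on the diagonal and $-1$ on the off-diagonals, and the linear term $-\theta_1$ is carried exactly by the $A$-nodes — and then to show that the distance-$d$ gap between $A$ and $A_d^c$ forces any black-box procedure to reveal the coordinates of the minimizer very slowly. First I would record the standard facts: the minimizer $\theta^*$ of $\avgFun^A$ over $\ell_2$ has $\theta^*_k=q^k$ with $q=\frac{\sqrt{\kappa_g}-1}{\sqrt{\kappa_g}+1}$ and $\kappa_g=\beta/\alpha$, and the ``progress estimate'' (\cite{nesterov2004introductory,bubeck2015convex}): if $\theta\in\mathrm{Span}(e_1,\dots,e_k)$ then $\avgFun^A(\theta)-\avgFun^A(\theta^*)\ge\frac{\alpha}{2}q^{2k}\|\theta^*\|^2$. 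Since every memory starts at $\{0\}$ we have $\theta_{i,0}=0$, so $\|\theta^*\|=\|\theta_{i,0}-\theta^*\|$ and $\theta_{i,t}\in\mathcal{M}_{i,t}$; hence the lemma follows once one shows that every node's memory stays inside $\mathrm{Span}(e_1,\dots,e_{k_i(t)})$ for an integer $k_i(t)\le 1+\frac{t}{1+d\tau}$.

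The heart of the argument is a coordinate-activation invariant, which I would prove by induction on $t$. For each node $i$ let $k_i(t)$ be the largest index of a coordinate not orthogonal to $\mathcal{M}_{i,t}$ (and $0$ if $\mathcal{M}_{i,t}=\{0\}$). The structural observations are: $M_1$ couples only coordinate $2\ell-1$ with $2\ell$, while $M_2$ couples $2\ell$ with $2\ell+1$ and fixes $e_1$; the gradient contribution $-e_1$ of the linear term is present only at $A$-nodes; and the individual Hessians $\frac{\alpha}{n}I+(\mathrm{const})M_j$ are block-diagonal with the same $2\times2$ blocks as $M_j$, so the Fenchel-conjugate oracle $\nabla f_i^*$, being the inverse of the affine gradient map, respects exactly the same block structure as $\nabla f_i$. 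Consequently a local computation at an $A$-node can raise $k_i$ by at most one and only from an odd value (and to $2$ from $0$); a computation at an $A_d^c$-node can raise $k_i$ by at most one and only from a positive even value; a computation at any other node leaves $k_i$ unchanged; and a communication step sets $k_i(t)\leftarrow\max\big(k_i(t),\ \max_{(i,j)\in\mathcal{E}}k_j(t-\tau)\big)$. In particular, across the whole network every even coordinate is created first at a node of $A$ and every odd coordinate $\ge 3$ first at a node of $A_d^c$ (this is where $A_d^c\ne\emptyset$ is used). Hence, to raise the network-wide maximal active coordinate from level $k$ to $k+1$, the level-$k$ information must first travel between $A$ and $A_d^c$, a distance at least $d$, i.e.\ at least $d$ communication rounds (time $\ge d\tau$), before the next coordinate-increasing computation (time $\ge 1$). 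Writing $T_k$ for the first time some memory reaches level $k$, this yields a recursion $T_{k+1}\ge T_k+1+d\tau$, which — after absorbing the $O(1)$ start-up cost of the first one or two coordinates (produced at an $A$-node) into the ``$1+$'' — gives $k_i(t)\le 1+\frac{t}{1+d\tau}$ for every $i$ and $t$.

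Combining the two parts, for every $i$ and $t$ one gets $\avgFun^A(\theta_{i,t})-\avgFun^A(\theta^*)\ge\frac{\alpha}{2}q^{2k_i(t)}\|\theta_{i,0}-\theta^*\|^2\ge\frac{\alpha}{2}q^{2(1+t/(1+d\tau))}\|\theta_{i,0}-\theta^*\|^2$, which is exactly the claimed inequality with $q=\frac{\sqrt{\kappa_g}-1}{\sqrt{\kappa_g}+1}$.

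The Nesterov-function analysis (the closed form of $\theta^*$ and the progress estimate) is routine and can be quoted. I expect the main obstacle to be the coordinate-activation invariant: one must (i) control the span of each memory under both oracles $\nabla f_i$ and $\nabla f_i^*$, the conjugate being the subtle one — here the block-diagonality of the individual Hessians is precisely what keeps it tame and what pins down the additive constant in the exponent; and (ii) run the ``information must cross the $A$–$A_d^c$ gap'' argument on an arbitrary graph, with the nodes outside $A\cup A_d^c$ acting as passive relays, and do so simultaneously for all $n$ outputs rather than a single one. Making (ii) rigorous — e.g.\ by an induction over time on the whole vector $(k_i(t))_{i\in\mathcal{V}}$ with the update rules above — is where the bookkeeping effort concentrates.
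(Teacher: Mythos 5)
Your proposal is correct and follows essentially the same route as the paper's own proof: tracking the last active coordinate $k_{i,t}$ of each node's memory, observing that the block structure of $M_1$ and $M_2$ (preserved under Fenchel conjugation) lets only $A$-nodes activate even coordinates and only $A_d^c$-nodes activate odd ones, so that each new coordinate costs one computation plus $d$ communication rounds, and then combining the resulting bound $k_{i,t}\le 1+\frac{t}{1+d\tau}$ with strong convexity and the geometric decay $\theta^*_k=q^k$ of the minimizer. The only cosmetic difference is that you quote Nesterov's progress estimate directly where the paper re-derives it from the tail sum $\sum_{k>k_{i,t}}(\theta^*_k)^2$; the substance is identical.
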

\begin{proof}
This lemma relies on the fact that most of the coordinates of the vectors in the memory of any node will remain equal to $0$. More precisely, let $k_{i,t} = \max\{k\in\N~:~\exists\theta\in\mathcal{M}_{i,t}\st\theta_k\neq0\}$ be the last non-zero coordinate of a vector in the memory of node $i$ at time $t$. Then, under any black-box procedure, we have, for any local computation step,
\BEQ
k_{i,t+1} \leq \left\{
\BA{ll}
k_{i,t} + \one\{k_{i,t}\equiv 0 \mbox{ mod } 2\} &\mbox{if } i\in A\\
k_{i,t} + \one\{k_{i,t}\equiv 1 \mbox{ mod } 2\} &\mbox{if } i\in A_d^c\\
k_{i,t} &\mbox{otherwise}\\
\EA\right..
\EEQ
Indeed, local gradients can only increase even dimensions for nodes in $A$ and odd dimensions for nodes in $A_d^c$. The same holds for gradients of the dual functions, since these have the same block structure as their convex conjugates. Thus, in order to reach the third coordinate, algorithms must first perform one local computation in $A$, then $d$ communication steps in order for a node in $A_d^c$ to have a non-zero second coordinate, and finally, one local computation in $A_d^c$. Accordingly, one must perform at least $k$ local computation steps and $(k-1)d$ communication steps to achieve $k_{i,t} \geq k$ for at least one node $i\in\mathcal{V}$, and thus, for any $k\in\N^*$,
\BEQ
\forall t < 1 + (k-1)(1+d\tau), k_{i,t}\leq k-1.
\EEQ
This implies in particular:
\BEQ\label{eq:lem1}
\forall i\in\mathcal{V}, k_{i,t} \leq \left\lfloor\frac{t-1}{1+d\tau}\right\rfloor+1\le \frac{t}{1+d\tau}+1.
\EEQ
Furthermore, by definition of $k_{i,t}$, one has $\theta_{i,k} = 0$ for all $k>k_{i,t}$, and thus
\BEQ\label{eq:lem2}
\|\theta_{i,t} - \theta^*\|_2^2 \geq \sum_{k=k_{i,t}+1}^{+\infty} {\theta^*_k}^2.
\EEQ
and, since $\avgFun^A$ is $\alpha$-strongly convex,
\BEQ\label{eq:lem3}
\avgFun^A(\theta_{i,t}) - \avgFun^A(\theta^*) \geq \frac{\alpha}{2}\|\theta_{i,t} - \theta^*\|_2^2.
\EEQ
Finally, the solution of the global problem $\min_\theta \avgFun^A(\theta)$ is $\theta^*_k = \left(\frac{\sqrt{\beta} - \sqrt{\alpha}}{\sqrt{\beta} + \sqrt{\alpha}}\right)^k$. Combining this result with Eqs.~(\ref{eq:lem1}), (\ref{eq:lem2}) and (\ref{eq:lem3}) leads to the desired inequality.
\end{proof}
Using the previous lemma with $d=\diamG$ the diameter of $\mathcal{G}$ and $A=\{v\}$ one of the pair of nodes at distance $\diamG$ returns the desired result.
\end{proof}

\begin{proof}[Proof of Theorem 2]
Let $\gamma_n = \frac{1-\cos(\frac{\pi}{n})}{1+\cos(\frac{\pi}{n})}$ be a decreasing sequence of positive numbers. Since $\gamma_2 = 1$ and $\lim_n \gamma_n = 0$, there exists $n\geq 2$ such that $\gamma_n\geq\gamma > \gamma_{n+1}$. The cases $n=2$ and $n\geq 3$ are treated separately.
If $n\geq 3$, let $\mathcal{G}$ be the linear graph of size $n$ ordered from node $v_1$ to $v_n$, and weighted with $w_{i,i+1} = 1 - a\one\{i=1\}$. Then, if $A=\{v_1,...,v_{\lceil n/32\rceil}\}$ and $d=(1-1/16)n - 1$, we have $|A_d^c|\geq|A|$ and \Lemma{A_lb} implies:
\BEQ
\avgFun^A(\theta_{i,t}) - \avgFun^A(\theta^*) \geq \frac{n\alpha}{2}\left(\frac{\sqrt{\kappa_g} - 1}{\sqrt{\kappa_g} + 1}\right)^{2\left(1+\frac{t}{1+d\tau}\right)} \|\theta_{i,0} - \theta^*\|^2.
\EEQ
A simple calculation gives $\kappa_l = 1 + (\kappa_g-1)\frac{n}{2|A|}$, and thus $\kappa_g\geq \kappa_l/16$. Finally, if we take $W_a$ as the Laplacian of the weighted graph $\mathcal{G}$, a simple calculation gives that, if $a=0$, $\gamma(W_a) = \gamma_n$ and, if $a=1$, the network is disconnected and $\gamma(W_a)=0$. Thus, by continuity of the eigenvalues of a matrix, there exists a value $a\in[0,1]$ such that $\gamma(W_a) = \gamma$. Finally, by definition of $n$, one has $\gamma > \gamma_{n+1} \geq\frac{2}{(n+1)^2}$, and $d\geq\frac{15}{16}(\sqrt{\frac{2}{\gamma}} - 1) - 1 \geq\frac{1}{5\sqrt{\gamma}}$ when $\gamma\leq\gamma_3=\frac{1}{3}$.

For the case $n=2$, we consider the totally connected network of $3$ nodes, reweight only the edge $(v_1,v_3)$ by $a\in[0,1]$, and let $W_a$ be its Laplacian matrix. If $a=1$, then the network is totally connected and $\gamma(W_a) = 1$. If, on the contrary, $a=0$, then the network is a linear graph and $\gamma(W_a) = \gamma_3$. Thus, there exists a value $a\in[0,1]$ such that $\gamma(W_a) = \gamma$, and applying \Lemma{A_lb} with $A=\{v_1\}$ and $d=1$ returns the desired result, since then $\kappa_g\geq 2\kappa_l/3$ and $d=1\geq \frac{1}{\sqrt{3\gamma}}$.
\end{proof}

\subsection{Convergence rates of SSDA and MSDA}

\begin{proof}[Proof of Theorem 3]
Each step of the algorithm can be decomposed in first computing gradients, and then communicating these gradients across all neighborhoods. Thus, one step takes a time $1+\tau$. Moreover, the Hessian of the dual function $F^*(\lambda\sqrt{W})$ is
\BEQ
(\sqrt{W}\otimes I_d)\nabla^2 F^*(\lambda\sqrt{W})(\sqrt{W}\otimes I_d),
\EEQ
where $\otimes$ is the Kronecker product and $I_d$ is the identity matrix of size $d$. Also, note that, in Alg.(2), the current values $x_t$ and $y_t$ are always in the image of $\sqrt{W}\otimes I_d$ (\ie the set of matrices $x$ such that $x^\top\one = 0$). The condition number (in the image of $\sqrt{W}\otimes I_d$) can thus be upper bounded by $\frac{\kappa_l}{\gamma}$, and Nesterov's acceleration requires $\sqrt{\frac{\kappa_l}{\gamma}}$ steps to achieve any given precision \cite{bubeck2015convex}.
\end{proof}

\begin{proof}[Proof of Theorem 4]
First, since $P_K(W)$ is a gossip matrix, Theorem 3 implies the convergence of Alg.(3). In order to simplify the analysis, we multiply $W$ by $\frac{2}{(1+\gamma)\lambda_1(W)}$, so that the resulting gossip matrix has a spectrum in $[1-c_2^{-1},1+c_2^{-1}]$. Applying Theorem 6.2 in \cite{chebyshevAcc} with $\alpha=1-c_2^{-1}$, $\beta=1+c_2^{-1}$ and $\gamma=0$ implies that the minimum
\BEQ
\min_{p\in\mathbb{P}_K,p(0)=0} \max_{x\in[1-c_2^{-1},1+c_2^{-1}]}|p(t)-1|
\EEQ
is attained by $P_K(x) = 1 - \frac{T_K(c_2(1-x))}{T_K(c_2)}$. Finally, Corollary 6.3 of \cite{chebyshevAcc} leads to
\BEQ
\gamma(P_K(W)) \geq \frac{1 - 2\frac{c_1^K}{1+c_1^{2K}}}{1 + 2\frac{c_1^K}{1+c_1^{2K}}} = \left(\frac{1-c_1^K}{1+c_1^K}\right)^2,
\EEQ
where $c_1 = \frac{1-\sqrt{\gamma}}{1+\sqrt{\gamma}}$, and taking $K=\lfloor\frac{1}{\sqrt{\gamma}}\rfloor$ implies
\BEQ
\frac{1}{\sqrt{\gamma(P_K(W))}} \leq \frac{1+c_1^{\frac{1}{\sqrt{\gamma}}+1}}{1-c_1^{\frac{1}{\sqrt{\gamma}}+1}} \leq 2.
\EEQ
The time required to reach an $\varepsilon>0$ precision using Alg.(3) is thus upper bounded by\\$O\left((1+K\tau)\sqrt{\frac{\kappa_l}{\gamma(P_K(W))}}\ln(1/\varepsilon)\right) = O\left(\sqrt{\kappa_l}(1+\frac{1}{\sqrt{\gamma}}\tau)\ln(1/\varepsilon)\right)$.
\end{proof}

\section{Composite problems for machine learning}
When the local functions are of the form
\BEQ
f_i(\theta) = g_i(B_i \theta) + c\|\theta\|^2,
\EEQ
where $B_i \in \R^{m_i \times d}$ and $g_i$ is smooth and has  proximal operator which is easy to compute (and hence also $g_i^*$), an additional Lagrange multiplier $\nu$ can be used to make the Fenchel conjugate of $g_i$ appear in the dual optimization problem. More specifically, from the primal problem of Eq.~(12), one has, with $\rho>0$ an arbitrary parameter:
\BEAS
\inf_{\Theta\sqrt{W}=0} F(\Theta)
& = & \inf_{\Theta\sqrt{W}=0,\ \forall i, x_i=B_i\theta_i}
\frac{1}{n} \sum_{i=1}^n g_i(x_i) + c\|\theta_i\|_2^2
\\ 
& = & \inf_\Theta \sup_{\lambda,\nu}
\frac{1}{n} \sum_{i=1}^n\Big\{\nu_i^\top B_i \theta_i - g_i^*(\nu_i) 
+ c \|\theta_i\|_2^2 \Big\} +
\frac{\rho}{n}\tr(\lambda^\top \Theta \sqrt{W})
\\
& = & 
 \sup_{\nu \in \prod_{i=1}^n \! \R^{m_i},\ \lambda \in \R^{d \times n  } }
- \frac{1}{n} \sum_{i=1}^n
 g_i^*(\nu_i)
 -\frac{1}{4c n} \sum_{i=1}^n \|  B_i^\top \nu_i +  \rho\lambda \sqrt{W}_i  \|_2^2.
\EEAS
To maximize the dual problem, we can use (accelerated) proximal gradient, with the updates:
\BEAS
\nu_{i,t+1} & = & \inf_{ \nu \in \R^{m_i}} g_i^*(\nu)
+ \frac{1}{2\eta} \big\| \nu - \nu_{i,t}  + \frac{\eta}{2c} B_i ( B_i^\top \nu_{i,t}  + \rho \lambda_t \sqrt{W}_i ) \big\|_2^2 \\
\lambda_{t+1} & = & \lambda_t - \eta  \frac{\rho}{2cn} \sum_{i=1}^n ( B_i^\top \nu_{i,t}  + \rho \lambda_t \sqrt{W}_i )\sqrt{W}_i^\top.
\EEAS
We can rewrite all updates in terms of $z_t = \lambda_t \sqrt{W} \in \R^{d \times n}$, as
\BEAS
\nu_{i,t+1} & = & \inf_{ \nu \in \R^{m_i}} g_i^*(\nu)
+ \frac{1}{2\eta} \big\| \nu - \nu_{i,t} + \frac{\eta}{2c} B_i ( B_i^\top \nu_{i,t}  + \rho z_{i,t} ) \big\|_2^2 \\
z_{t+1} & = & z_t - \eta  \frac{ \rho}{2cn} \sum_{i=1}^n ( B_i^\top \nu_{i,t}  + \rho z_i )W_i^\top.
\EEAS
In order to compute the convergence rate of such an algorithm, if we assume that:
\BIT
\item each $g_i$ is $\mu$-smooth,
\item the largest singular value of each $B_i$ is less than $M$,
\EIT
then we simply need to compute the condition number of the quadratic function
$$
Q(\nu,\lambda) =  \frac{1}{2\mu } \sum_{i=1}^n \| \nu_i\|_2^2 +
 \frac{1}{4c } \sum_{i=1}^n \|  B_i^\top \nu_i +  \rho\lambda \sqrt{W}_i  \|_2^2.
$$
With the choice $\rho^2 = \frac{1}{\lambda_{\max}(W)} \big( \frac{c}{\mu} + M^2)$, it is lower bounded by
$
\big( 1 + \mu \frac{M^2}{c} \big) \frac{4}{\gamma}
$, which is a natural upper bound on $\kappa_l  / \gamma$. Thus this 
essentially leads to the same convergence rate than the non-composite case with the Nesterov and Chebyshev accelerations, \ie  $\sqrt{\kappa_l / \gamma}$.

The bound on the conditional number may be shown through the two inequalities:
\BEAS
Q(\nu,\lambda) & \leqslant &  \frac{1}{2\mu } \sum_{i=1}^n \| \nu_i\|^2
+
 \frac{1}{2c } \sum_{i=1}^n \|    \rho\lambda \sqrt{W}_i  \|_2^2
+  \frac{1}{2c } \sum_{i=1}^n \|  B_i^\top \nu_i   \|_2^2,
\\
Q(\nu,\lambda) & \geqslant &  \frac{1}{2\mu } \sum_{i=1}^n \| \nu_i\|^2
+
\frac{1}{1+\eta}  \frac{1}{4c }  \sum_{i=1}^n \|    \rho\lambda \sqrt{W}_i  \|_2^2
 - \frac{1}{\eta}  \frac{1}{4c } \sum_{i=1}^n \|  B_i^\top \nu_i   \|_2^2,
\EEAS
 with $\eta = M^2 \mu / c$.

\bibliographystyle{unsrt}
\bibliography{distributed_dual}

\end{document}